 \def\NZQ{\Bbb}               % the font for N,Z,Q,R,C
 \def\NN{{\NZQ N}}
 \def\ZZ{{\NZQ Z}}
 \def\DD{{\NZQ D}}
 \def\FF{{\NZQ F}}
 \def\GG{{\NZQ G}}
 \def\HH{{\NZQ H}}
 \def\frk{\frak}               % font for "Fraktur"
 \def\mm{{\frk m}}
 \def\nn{{\frk n}}
 \def\ab{{\bold a}}
 \def\xb{{\bold x}}
 \def\db{{\bold d}}
 \def\opn#1#2{\def#1{\operatorname{#2}}} % to make operators
 \opn\chara{char} \opn\length{\ell} \opn\pd{pd} \opn\rk{rk}
 \opn\projdim{proj\,dim} \opn\injdim{inj\,dim} \opn\rank{rank}
 \opn\depth{depth} \opn\grade{grade} \opn\height{height}
 \opn\embdim{emb\,dim} \opn\codim{codim}
 \opn\Tr{Tr} \opn\bigrank{big\,rank}
 \opn\superheight{superheight}\opn\lcm{lcm}
 \opn\trdeg{tr\,deg}%\emph{
 \opn\reg{reg} \opn\lreg{lreg} \opn\ini{in} \opn\lpd{lpd}
 \opn\size{size} \opn\sdepth{sdepth}
 \opn\link{link}\opn\fdepth{fdepth}\opn\lex{lex}
 \opn\div{div} \opn\Div{Div} \opn\cl{cl} \opn\Cl{Cl}
 \opn\Spec{Spec} \opn\Supp{Supp} \opn\supp{supp} \opn\Sing{Sing}
 \opn\Ass{Ass} \opn\Min{Min}\opn\Mon{Mon}
 \opn\Ann{Ann} \opn\Rad{Rad} \opn\Soc{Soc}
 \opn\Im{Im} \opn\Ker{Ker} \opn\Coker{Coker} \opn\Am{Am}
 \opn\Hom{Hom} \opn\Tor{Tor} \opn\Ext{Ext} \opn\End{End}
 \opn\Aut{Aut} \opn\id{id}
 \opn\nat{nat}
 \opn\pff{pf}%   \pf exists already
 \opn\Pf{Pf} \opn\GL{GL} \opn\SL{SL} \opn\mod{mod} \opn\ord{ord}
 \opn\Gin{Gin} \opn\Hilb{Hilb}\opn\sort{sort}
 \opn\Tot{Tot}
 \opn\aff{aff} \opn
\opn\relint{relint} \opn\st{st}
 \opn\lk{lk} \opn\cn{cn} \opn\core{core} \opn\vol{vol}
 \opn\link{link} \opn\star{star}\opn\lex{lex}\opn\set{set}
 \opn\gr{gr}
 \def\pot#1#2{#1[\kern-0.28ex[#2]\kern-0.28ex]}
 \opn\dirlim{\underrightarrow{\lim}}
 \opn\inivlim{\underleftarrow{\lim}}
 \let\union=\cup
 \let\tensor=\otimes
 \let\iso=\cong
 \let\Sect=\bigcap
 \let\Dirsum=\bigoplus
 \let\Tensor=\bigotimes
 \let\to=\rightarrow
 \let\To=\longrightarrow
 \def\Implies{\ifmmode\Longrightarrow \else
         \unskip${}\Longrightarrow{}$\ignorespaces\fi}
 \def\implies{\ifmmode\Rightarrow \else
         \unskip${}\Rightarrow{}$\ignorespaces\fi}
 \def\iff{\ifmmode\Longleftrightarrow \else
         \unskip${}\Longleftrightarrow{}$\ignorespaces\fi}
 \newtheorem{Theorem}{Theorem}[section]
 \newtheorem{Lemma}[Theorem]{Lemma}
 \newtheorem{Corollary}[Theorem]{Corollary}
 \newtheorem{Examples}[Theorem]{Examples}
 \newtheorem{Definition}[Theorem]{Definition}
 \let\epsilon\varepsilon
 \let\kappa=\varkappa
 \def\qed{\ifhmode\textqed\fi
       \ifmmode\ifinner\quad\qedsymbol\else\dispqed\fi\fi}
 \def\textqed{\unskip\nobreak\penalty50
        \hskip2em\hbox{}\nobreak\hfil\qedsymbol
        \parfillskip=0pt \finalhyphendemerits=0}
 \def\dispqed{\rlap{\qquad\qedsymbol}}
 \opn\dis{dis}
 \def\pnt{{\raise0.5mm\hbox{\large\bf.}}}
 \opn\Lex{Lex}
\begin{document}

 \title{Generalized mixed product ideals}

 \author {J\"urgen Herzog,  Roya Moghimipor and Siamak Yassemi}

\address{J\"urgen Herzog, Fachbereich Mathematik, Universit\"at Duisburg-Essen, Campus Essen, 45117
Essen, Germany} \email{juergen.herzog@uni-essen.de}

\address{Roya Moghimipor, Department of Mathematics, Science and Research Branch of Islamic Azad University, Tehran, Iran} \email{roya\_moghimipour@yahoo.com}

\address{Siamak Yassemi, School of Mathematics, Statistics and Computer Science, College of
Science, University of Tehran, Tehran, Iran, and School of Mathematics, Institute
for Research in Fundamental Sciences (IPM), P.O. Box 19395-5746, Tehran, Iran.}\email{yassemi@ipm.ir}

 \begin{abstract}
We consider classes of ideals  which generalize the mixed product ideals introduced by Restuccia and Villarreal \cite{RV}, and also generalize the  expansion construction by Bayati and the first author \cite{BH}. We compute the minimal graded free resolution of  generalized mixed product ideals and show that the regularity of a generalized mixed product ideal coincides with regularity of the monomial ideal by  which it is induced.
 \end{abstract}

\thanks{The second author wants to thank the University of Duisburg-Essen for its hospitality during the preparation of this work.}
\subjclass[2010]{13C13, 13D02}
\keywords{Free resolutions, Graded Betti numbers, Monomial ideals}

 \maketitle

\section*{Introduction}
In 2001 Restuccia and Villarreal \cite{RV} introduced mixed product ideals, which form  a particular class of squarefree monomial ideals, and they classified those among these ideals  which are normal, thereby generalizing results on normality of previously known cases. Subsequently, Rinaldo \cite{R} and Ionescu and Rinaldo \cite{IR} studied other algebraic and homological properties of this class of ideals, and Hoa and Tam \cite{HT} computed the regularity and some other algebraic invariants of mixed products of arbitrary graded ideals.

Mixed product ideals, as introduced by Restuccia and Villarreal are of the form $(I_qJ_r + I_pJ_s)S$, where for integers $a$ and $b$, the ideal  $I_a$  (resp.\ $J_b$) is the ideal generated by all squarefree monomials of degree $a$ in the polynomial ring $A = K[x_1,\ldots, x_n]$
(resp.\ of degree $b$ in the polynomial ring $B = K[y_1, \ldots, y_m]$),  and where $0 < p < q \leq n$, $0 < r < s \leq m$
and $S = K[x_1, \ldots,  x_n, y_1, \ldots, y_m]$. Thus the ideal $L=(I_qJ_r + I_pJ_s)S$ is obtained from the monomial ideal $I=(x^qy^r,x^py^s)$ by replacing $x^q$ by $I_q$, $x^p$ by $I_p$, $y^r$ by $J_r$ and $y^s$ by $J_s$. One may wonder whether the ideals $I$ and $L$ share any algebraic properties. Indeed they do: they have the same regularity. This observation prompted us to do a similar construction starting with  an arbitrary monomial ideal $I$ in the polynomial ring  $K[x_1,\ldots,x_n]$ over the field $K$. For this construction  we choose for each $i$ a set of new variables $x_{i1},x_{i2},\ldots,x_{im_i}$ and   replace in  each minimal generator $x_1^{a_1}x_2^{a_2}\cdots x_n^{a_n}$ of $I$ each of the  factor $x_i^{a_i}$ by a monomial ideal in $T_i=K[x_{i1},x_{i2},\ldots,x_{im_i}]$ generated in degree $a_i$.  We call the monomial ideal $L$ obtained in this way a {\em  generalized mixed product ideal} induced by $I$. This construction is not completely new. Indeed in the paper \cite{BH}  by Bayati and the first author a similar construction, called expansion, is made. There however, each  $x_i^{a_i}$ is replaced by $(x_{i1},\cdots, x_{im_i})^{a_i}$, while in our generalized mixed product ideals each  $x_i^{a_i}$ is replaced by  an {\em arbitrary} monomial ideal of $T_i$ generated in degree $a_i$. Thus this new class of monomial ideals  widely generalizes the classical mixed product ideals as well as the ideals obtained as expansions. Path ideals of complete bipartite graphs are examples of generalized mixed product ideals as introduced here.

The main result (Theorem~\ref{regularity}) of this paper states that a generalized mixed product ideal $L$ induced by $I$ has the same regularity as $I$, provided the ideals which replace the pure powers $x_i^{a_i}$ all have a linear resolution.  As a consequence  we obtain the result that under the above assumptions, $L$ has a linear resolution if and only if $I$ has a linear resolution, see Corollary~\ref{linear}. We also show that  the projective dimension of $L$ can be expressed in terms of the multi-graded shifts in the resolution of $I$ and the projective dimension of the ideals which replace the pure powers.

As in the paper \cite{BH} we construct a double complex whose total complex provides a multi-graded free resolution of the generalized mixed product ideal. The details of the construction are bit technical though the principle idea behind it is quite natural: the data of the multi-graded free resolution $\FF$ of $I$ are used to first construct an acyclic complex $\FF^*$ of direct sums of ideals with $H_0(\FF^*)=T/L$.  In the next step the free resolutions of the modules $F_i^*$ are patched together to form the desired double complex.

\section{Complexes attached to generalized mixed product ideals}
Let $K$ be a field and let $S=K[x_1,\ldots,x_n]$ be the polynomial ring over $K$ in the variables $x_1,\ldots,x_n$, and let $I\subset S$ be a monomial ideal with $I\neq S$ whose minimal set of generators  is $G(I)=\{\xb^{\ab_1},\ldots, \xb^{\ab_m}\}$. Here $\xb^{\ab}=x_1^{\ab(1)}x_2^{\ab(2)}\cdots x_n^{\ab(n)}$ for $\ab=(\ab(1),\ldots,\ab(n))\in\NN^n$.

Next we consider the polynomial ring $T$ over $K$ in the variables
\[
x_{11},\ldots,x_{1m_1},x_{21},\ldots,x_{2m_2},\ldots,x_{n1},\ldots,x_{nm_n}.
\]
Notice that $T=T_1\tensor_KT_2\tensor_K\cdots\tensor_K T_n$, where $T_j=K[x_{j1},x_{j2},\ldots,x_{jm_j}]$ for $j=1,\ldots,n$.

For $i=1,\ldots,n$ and $j=1,\ldots,m$ let $L_{i, \ab_j(i)}$ be a monomial ideal in the variables $x_{i1},x_{i2},\ldots,x_{im_i}$ such that
\begin{eqnarray}
\label{inclusion}
L_{i, \ab_j(i)}\subset L_{i, \ab_k(i)} \quad \text{whenever} \quad \ab_j(i)\geq \ab_k(i).
\end{eqnarray}

Given these ideals we define for $j=1,\ldots,m$ the monomial ideals
\begin{eqnarray}
\label{lj}
L_j=\prod_{i=1}^nL_{i, \ab_j(i)} \subset T,
\end{eqnarray}
and set $L=\sum_{j=1}^mL_j$.
We call $L$ a {\em generalized mixed product ideal} induced by $I$.

\begin{Examples}
\label{tania}
{\em
(a) Consider the mixed product ideals introduced by Restuccia and Villarreal \cite{RV}. A mixed product ideal is a monomial ideal of the form $L=I_kJ_r+I_sJ_t$ where the ideals $I_k$ and $I_s$ are monomial ideals generated in degree $k$  and $s$, respectively,  in the variables $x_1,\ldots,x_n$ and  $J_r$ and $J_t$ are monomial ideals generated in degree $r$ and $t$ in the variables $y_1,\ldots,y_m$. In our terminology $L$ is induced by the ideal $I=(x^ky^r,x^sy^t)$.

(b) As mentioned in \cite{RV} mixed product ideals also appear as generalized graph ideals (called path ideals by Conca and De Negri \cite{CN}) of complete bipartite graphs. Let $G$ a finite simple graph with vertices $x_1,\ldots,x_n$. A {\em path} of length $t$ in $G$ is sequence $x_{i_1},\ldots,x_{i_t}$ of pairwise distinct vertices such that $\{x_{i_k},x_{i_{k+1}}\}$ is an edge of  $G$. Let $K$ be a field and $S=K[x_1,\ldots,x_n]$ be the polynomial ring over $K$ in the variables $x_1,\ldots,x_n$. Then the  {\em path ideal} $I_t(G)$ is the ideal generated by all monomials $x_{i_1}\cdots x_{i_t}$ such that  $x_{i_1},\ldots,x_{i_t}$ is a path of length $t$.

Now let $G$ be a complete  $n$-partite graph with vertex set $V=V_1\union V_2\union \cdots\union V_n$ and $V_i=\{x_{i1},\ldots,x_{im_i}\}$ for $i=1,\ldots,n$. For this graph we have
\[
I_{t}(G)=\sum_{0\leq j_{i}\leq \min\{(t+1)/2,m_{i}\}, \sum_{i=1}^nj_{i}=t}I_{1j_{1}}I_{2j_{2}}\dots I_{nj_{n}}
\]
where the ideals $I_{ij_{i}}$ are the  monomial ideals generated by all squarefree monomials of  degree $j_{i}$ in the variables $\{x_{i1},\ldots,x_{im_i}\}$.
Thus  $I_{t}(G)$ is induced by the ideal $I$ of Veronese type  generated by the monomials $x_{1}^{j_{1}}x_{2}^{j_{2}}\dots x_{n}^{j_{n}}$ with $\sum_{i=1}^nj_{i}=t$ and $0\leq j_{i}\leq \min\{(t+1)/2,m_{i}\}$.
}
\end{Examples}

\medskip
Let
\begin{eqnarray}
\label{res}
\FF\: 0\to F_p\to F_{p-1} \to \cdots \to F_2\to F_1\to F_0\to S/I\to 0
\end{eqnarray}
be the $\ZZ^n$-graded minimal free $S$-resolution of $S/I$. Based on the data of this resolution, we construct an acyclic complex $\FF^*$ whose $0$th homology gives us $T/L$.

Let $F_i=\Dirsum_{j=1}^{\beta_i}S(-\ab_{ij})$ with $\ab_{ij}\in \NN^n$ for $i=1,\ldots,n$. Then $F_i=\Dirsum_{j=1}^{\beta_i}Sf_{ij}$ where $f_{ij}$ is a basis element of the free $S$-module $F_i$ of $\ZZ^n$-degree $\ab_{ij}$. Let $\partial$ denotes the chain map of $\FF$. Then
\begin{eqnarray}
\label{def}
\partial(f_{ij})=\sum_k\lambda^{(i)}_{kj}\xb^{\ab_{ij}-\ab_{i-1,k}}f_{i-1,k}.
\end{eqnarray}
Here $\lambda^{(i)}_{kj}=0$ if $\ab_{ij}=\ab_{i-1,k}$ or $\ab_{ij}-\ab_{i-1,k}\not\in \NN^n$. The matrices $(\lambda^{(i)}_{kj})_{k=1,\ldots,\beta_{i-1}\atop
j=1,\ldots,\beta_{i}}$ are called the {\em scalar matrices} of the resolution $\FF$.

\medskip
Now we choose for each of the generators $\xb^{\ab_j}$ of $I$ a monomial ideal $L_j$ in $T$ (not necessary of the form
(\ref{lj})), and define the following complex $\FF^*$: we set $F^*_0=T$ and $F^*_i=\Dirsum_{j=1}^{\beta_i} L_{ij}$ where the  monomial ideals $L_{ij}$ are inductively defined as follows: we let $L_{1j}=L_j$ for all $j$. Suppose $L_{i-1,j}$ is already defined for all $j$. For a given number $j$ with $1\leq j\leq \beta_i$, let $k_1,k_2,\ldots,k_r$ be the numbers for which $\lambda^{(i)}_{k_tj}\neq 0$. Then we set
\begin{eqnarray}
\label{intersection}
L_{ij}=\Sect_{t=1}^rL_{i-1, k_t}.
\end{eqnarray}
The chain map $\partial^*$ of $\FF^*$ is given by
\[
\partial^*\: \Dirsum_{j=1}^{\beta_i}L_{ij}\To \Dirsum_{j=1}^{\beta_{i-1}}L_{i-1,j},\quad u\mapsto \lambda^{(i)}u,
\]
where
\[
u= \left( \begin{array}{c}
u_1 \\
u_2 \\
\vdots\\
u_{\beta_i}
\end{array} \right)
\quad \text{with} \quad u_j\in L_{ij}.
\]
Observe that $\partial^*$ is well-defined, that is, $\partial^*( \Dirsum_{j=1}^{\beta_i}L_{ij})\subset \Dirsum_{j=1}^{\beta_{i-1}}L_{i-1,j}$. Indeed, let $v\in  \Dirsum_{j=1}^{\beta_i}L_{ij}$ be a column vector. We may assume that  $v_\ell=0$ for $\ell\neq j$. Then
\[
\partial^*(v)=\left( \begin{array}{c}
u_1 \\
u_2 \\
\vdots\\
u_{\beta_{i-1}}
\end{array} \right),
\]
where  $u_k=\lambda^{(i)}_{kj}v_j$ for $k=1,\ldots,\beta_{i-1}$. Thus it follows from (\ref{intersection}) that   $\partial^*(v)\in \Dirsum_{j=1}^{\beta_{i-1}}L_{i-1,j}$.

\begin{Lemma}
\label{easyforroya}
$\FF^*$  is a complex of $T$-modules with $H_0(\FF^*)=T/L$.
\end{Lemma}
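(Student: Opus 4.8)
The statement asks for two things: that $\FF^*$ is a complex of $T$-modules, and that its zeroth homology is $T/L$. Since each $L_{ij}$ is an ideal of $T$ and $\partial^*$ is given by left multiplication by a matrix $\lambda^{(i)}$ whose entries lie in $K\subset T$, the $F_i^*$ are $T$-modules and the maps $\partial^*$ are automatically $T$-linear; that $\partial^*$ lands in $\Dirsum_j L_{i-1,j}$ has already been verified above using (\ref{intersection}). Thus the only substantive points are the relation $\partial^*\circ\partial^*=0$ and the computation of $H_0(\FF^*)$.

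My plan for the complex property is to reduce it to a purely numerical identity among the scalar matrices. Because $\partial^*_i$ is multiplication by $\lambda^{(i)}$, the composite $\partial^*_{i-1}\circ\partial^*_i$ is multiplication by the product matrix $\lambda^{(i-1)}\lambda^{(i)}$, so it suffices to prove $\lambda^{(i-1)}\lambda^{(i)}=0$. I would extract this from $\partial_{i-1}\circ\partial_i=0$ in the resolution $\FF$. Applying (\ref{def}) twice, the coefficient of $f_{i-2,l}$ in $\partial_{i-1}\partial_i(f_{ij})$ equals
\[
\Bigl(\sum_k \lambda^{(i-1)}_{lk}\lambda^{(i)}_{kj}\Bigr)\,\xb^{\ab_{ij}-\ab_{i-2,l}}.
\]
The key observation is that the monomial $\xb^{\ab_{ij}-\ab_{i-2,l}}$ does not depend on the summation index $k$, so it factors out of the whole sum. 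When $\ab_{ij}-\ab_{i-2,l}\in\NN^n$ this monomial is a nonzerodivisor in $S$, and the vanishing of the coefficient forces $\sum_k \lambda^{(i-1)}_{lk}\lambda^{(i)}_{kj}=0$. When $\ab_{ij}-\ab_{i-2,l}\notin\NN^n$ every summand already vanishes, since $\lambda^{(i)}_{kj}\neq 0$ and $\lambda^{(i-1)}_{lk}\neq 0$ would give $\ab_{ij}\geq \ab_{i-1,k}\geq \ab_{i-2,l}$, contradicting the support convention on the $\lambda$'s. In either case the $(l,j)$ entry of $\lambda^{(i-1)}\lambda^{(i)}$ is zero, so $\lambda^{(i-1)}\lambda^{(i)}=0$ and hence $\partial^*\circ\partial^*=0$.

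For the homology I would simply identify the cokernel of $\partial^*_1$. Here $F_0^*=T$, the map $\partial^*_1$ is multiplication by the row matrix $\lambda^{(1)}$, and $L_{1j}=L_j$ by definition. Since $f_{1j}$ maps under $\partial_1$ to a nonzero minimal generator of $I$, each scalar $\lambda^{(1)}_{1j}$ is a unit of $K$, whence $\lambda^{(1)}_{1j}L_j=L_j$. Therefore
\[
\Im(\partial^*_1)=\sum_{j=1}^{\beta_1}\lambda^{(1)}_{1j}L_j=\sum_{j=1}^{\beta_1}L_j=L,
\]
and $H_0(\FF^*)=T/\Im(\partial^*_1)=T/L$, as claimed. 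The main obstacle, and really the only nonformal step, is the passage from $\partial_{i-1}\partial_i=0$ to the scalar identity $\lambda^{(i-1)}\lambda^{(i)}=0$: one must argue carefully that the monomial factor attached to each entry of the product is common to the entire $k$-sum and that the support convention handles the exponents lying outside $\NN^n$. Everything else—$T$-linearity, well-definedness via (\ref{intersection}), and the cokernel computation—is routine bookkeeping on the definitions together with $L_{1j}=L_j$.
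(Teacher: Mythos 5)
Your proposal is correct, and the only place it genuinely diverges from the paper is in how the identity $\lambda^{(i-1)}\lambda^{(i)}=0$ is obtained. The paper delegates this to Lemma~\ref{scalar}: it observes that the scalar complex $\bar{\FF}$ is $\FF\tensor_S S/J$ with $J=(x_1-1,\ldots,x_n-1)$, so $\bar{\FF}$ is a complex (and in fact exact, since $H_i(\bar\FF)\iso\Tor_i(S/I,S/J)$ is killed by $I+J=S$). You instead prove the matrix identity by hand: expanding $\partial_{i-1}\partial_i(f_{ij})$ via (\ref{def}), the monomial attached to the $(l,j)$ entry telescopes to $\xb^{\ab_{ij}-\ab_{i-2,l}}$ independently of the inner index $k$, so it factors out; it is a nonzerodivisor when the exponent lies in $\NN^n$, and when it does not, the support convention on the $\lambda$'s kills every summand (if some $k$ had $\lambda^{(i-1)}_{lk}\lambda^{(i)}_{kj}\neq 0$ one would get $\ab_{ij}\geq\ab_{i-1,k}\geq\ab_{i-2,l}$, a contradiction). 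Both arguments are sound; the specialization $x_i\mapsto 1$ is slicker and simultaneously yields the \emph{exactness} of $\bar\FF$, which the paper needs later (Lemma~\ref{needed} and Theorem~\ref{main}), whereas your entrywise computation is more elementary and self-contained but establishes only the complex property. Your treatment of $H_0$ matches the paper's: for a minimal resolution $F_0=S$ and $\partial(f_{1j})=\xb^{\ab_{1j}}$, so the $\lambda^{(1)}_{1j}$ are units (in fact equal to $1$), giving $\Im\partial_1^*=\sum_j L_j=L$.
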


\begin{proof}
We use the fact, shown in the next lemma, that $\lambda^{(i-1)} \lambda^{(i)}=0$ for all $i=2,\ldots, p$.
\[
\partial^*(\partial^*(u))=\lambda^{(i-1)}(\lambda^{(i)}(u)) =   (\lambda^{(i-1)} \lambda^{(i)})(u)=0\quad \text{for all} \quad u\in F_i^*.
\]
This shows that $\FF^*$ is a complex.

Since $\lambda^{(1)}=(1,1,\ldots 1)$, it follows that
\[
\partial^*(\Dirsum_{j=1}^{\beta_1}L_{1j}) =\sum_{j=1}^{\beta_1}L_{1j}= \sum_{j=1}^{m}L_j=L,
\]
and hence $H_0(\FF^*)=T/L$.
\end{proof}

We call $\FF^*$ the complex of $L_1,\ldots,L_m$ induced by $I$.

We attach to $\FF$ a complex $\bar{\FF}$ of $K$-vector spaces,  which we call the {\em scalar complex} of $\FF$.

\begin{Lemma}
\label{scalar} The following sequence of $K$-linear maps
\begin{eqnarray*}
\bar{\FF}\:\; 0\to K^{\beta_p}\overset{\lambda^{(p)}}{\to}K^{\beta_{p-1}}\overset{\lambda^{(p-1)}}{\to}\cdots \overset{\lambda^{(3)}}\to K^{\beta_{2}}\overset{\lambda^{(2)}}{\to} K^{\beta_1}\overset{\lambda^{(1)}}{\to} K\to  0
\end{eqnarray*}
is an exact complex.
\end{Lemma}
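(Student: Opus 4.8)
The plan is to realize $\bar\FF$ as the result of substituting $x_1=\cdots=x_n=1$ in $\FF$, and then to read off its homology as a Tor-module that vanishes for support reasons. Write $\nn=(x_1-1,\ldots,x_n-1)$ and regard $K=S/\nn$ as an (ungraded) $S$-module via the $K$-algebra map $S\to K$, $x_j\mapsto 1$.

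First I would identify $\bar\FF=\FF\tensor_S K$. Since $F_i=\Dirsum_{j}Sf_{ij}$, we get $F_i\tensor_S K\iso K^{\beta_i}$, and because every monomial $\xb^{\ab_{ij}-\ab_{i-1,k}}$ maps to $1$ in $K$, formula (\ref{def}) shows that $\partial\tensor 1$ is exactly multiplication by the scalar matrix $\lambda^{(i)}$. As $\beta_0=1$, the rightmost term is $K^{\beta_0}=K$ and $\lambda^{(1)}=(1,\ldots,1)$, so the tensored complex is precisely $\bar\FF$. In particular $\bar\FF$ is a complex, i.e. $\lambda^{(i-1)}\lambda^{(i)}=0$ (the identity invoked in Lemma~\ref{easyforroya}); concretely this also drops out of $\partial^2(f_{ij})=0$ by comparing the coefficient of each $f_{i-2,l}$, which is $\big(\sum_k\lambda^{(i-1)}_{lk}\lambda^{(i)}_{kj}\big)\xb^{\ab_{ij}-\ab_{i-2,l}}$.

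Because $\FF$ is a free resolution of $S/I$, the homology of $\bar\FF=\FF\tensor_SK$ is $H_i(\bar\FF)=\Tor_i^S(S/I,K)$ for every $i$, including $i=0$ (which controls surjectivity of $\lambda^{(1)}$). So it remains to show $\Tor_i^S(S/I,S/\nn)=0$ for all $i\geq 0$. Here the monomial hypothesis enters through the crucial point $I\not\subset\nn$: each minimal generator satisfies $\xb^{\ab_j}\equiv 1\pmod{\nn}$, so $\xb^{\ab_j}\notin\nn$, hence $I_\nn=S_\nn$, giving $(S/I)_\nn=0$.

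Finally I would deduce the vanishing. Since $K=S/\nn$ is annihilated by $\nn$, so is each $\Tor_i^S(S/I,K)$, whence $\Supp\Tor_i^S(S/I,K)\subset\{\nn\}$; thus $\Tor_i^S(S/I,K)=0$ iff its localization at $\nn$ is $0$, and $\Tor_i^S(S/I,K)_\nn=\Tor_i^{S_\nn}((S/I)_\nn,K_\nn)=0$ by the previous paragraph. This gives exactness of $\bar\FF$ at every spot. I do not anticipate a hard obstacle: the one point needing care is that $x_j\mapsto 1$ is not graded, so the identification $\bar\FF=\FF\tensor_SK$ and the Tor computation must be carried out for ungraded $S$-modules. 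If one prefers to bypass localization, one may instead resolve $K$ by the Koszul complex on the regular sequence $x_1-1,\ldots,x_n-1$ and note that $H_i\big(x_1-1,\ldots,x_n-1;S/I\big)$ is killed by $\nn+\Ann(S/I)=\nn+I=S$, hence is $0$.
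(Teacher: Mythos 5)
Your proof is correct and follows essentially the same route as the paper: identify $\bar{\FF}$ with $\FF\tensor_S S/J$ for $J=(x_1-1,\ldots,x_n-1)$, so that $H_i(\bar{\FF})\iso\Tor_i^S(S/I,S/J)$, and then show this Tor vanishes using $I+J=S$. The only (immaterial) difference is that the paper concludes directly from the fact that each $H_i(\bar{\FF})$ is annihilated by $I+J=S$, whereas your main argument reaches the same vanishing by localizing at $J$ — and you record the paper's annihilator argument yourself in your closing remark.
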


\begin{proof}
Let $J=(x_1-1,x_2-1,\ldots,x_n-1)$. Then $\bar{\FF}\iso \FF\tensor S/J$, and hence $\bar{\FF}$ is a complex with $H_i(\bar{\FF})\iso \Tor_i(S/I,S/J)$ for all $i$. It follows that each $H_i(\bar{\FF})$ is annihilated by $I+J$.   Since $I+J=S$, it follows that  $H_i(\bar{\FF})=0$ for all $i$, as desired.
\end{proof}

We now show that $\FF^*$ is acyclic for the choice of the ideals $L_j$ as given in (\ref{lj}).

\begin{Theorem}
\label{main}
Let $I\subset S=K[x_1,\ldots,x_n]$ be a monomial ideal with $G(I)=\{\xb^{\ab_1},\ldots,\xb^{\ab_m}\}$. For   $i=1,\ldots,n$ and $j=1,\ldots,m$ let $L_{i, \ab_j(i)}$ be a monomial ideal in the variables $x_{i1},x_{i2},\ldots,x_{im_i}$ satisfying  condition {\em (\ref{inclusion})},  and let  $\FF^*$ be the complex of $L_1,\ldots,L_m$ induced by $I$ with  $L_j=\prod_{i=1}^nL_{i, \ab_j(i)}$. Then $\FF^*$ is acyclic with $H_0(\FF^*)=T/L$, where $L=\sum_{j=1}^mL_j$.
\end{Theorem}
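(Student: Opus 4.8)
The plan is to pass to the fine $\ZZ^{m_1+\cdots+m_n}$-grading of $T$; throughout I write $i$ for homological degree and reserve $s\in\{1,\dots,n\}$ for the coordinate index of $\NN^n$ and of the groups of variables, so that the inducing ideals are $L_{s,\ab_j(s)}$. Each $L_{ij}$ is a monomial, hence fine-graded, submodule of $T$, and $\partial^*$ is the action of the scalar matrices $\lambda^{(i)}$ and so is homogeneous of degree $0$. Since $\FF^*$ is already a complex with $H_0(\FF^*)=T/L$ by Lemma~\ref{easyforroya}, and since taking fine-graded components is exact, it suffices to fix a multidegree $\cb=(\cb_1,\dots,\cb_n)$ (with $\cb_s\in\NN^{m_s}$ the $T_s$-part) and to show that the strand $\FF^*_\cb$ has vanishing homology in homological degrees $\geq 1$.

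For such a $\cb$ I would record for each $s$ the threshold
\[
t_s=\max\{\,a : \xb^{\cb_s}\in L_{s,a}\,\},\qquad t_s=-\infty \text{ if no such }a\text{ occurs},
\]
where $a$ ranges over the exponents $\ab_j(s)$ of $G(I)$. By condition (\ref{inclusion}) membership is a down-set in $a$, so $\xb^{\cb_s}\in L_{s,a}\iff a\leq t_s$. Since $T=T_1\tensor_K\cdots\tensor_K T_n$ has separated variables, $\xb^\cb\in L_j=\prod_s L_{s,\ab_j(s)}\iff\ab_j(s)\leq t_s$ for all $s$, that is $\iff\ab_{1j}\leq\mathbf t$, where $\mathbf t=(t_1,\dots,t_n)$ and $\ab_{1j}=\ab_j$. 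The core claim is that this persists in every homological degree: for all $i\geq 1$ and all $j$,
\[
\xb^\cb\in L_{ij}\iff \ab_{ij}\leq\mathbf t.
\]
I would prove it by induction on $i$, the case $i=1$ being the computation above. For the inductive step, (\ref{intersection}) gives $\xb^\cb\in L_{ij}$ if and only if $\xb^\cb\in L_{i-1,k}$ for every $k$ with $\lambda^{(i)}_{kj}\neq 0$, equivalently (by induction) $\ab_{i-1,k}\leq\mathbf t$ for all such $k$.

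The step therefore reduces to the key lemma that in the \emph{minimal} multigraded resolution $\FF$ one has, for $i\geq 2$,
\[
\ab_{ij}=\bigvee_{k:\,\lambda^{(i)}_{kj}\neq 0}\ab_{i-1,k}
\]
(componentwise maximum); granting it, $\ab_{ij}\leq\mathbf t$ is equivalent to $\ab_{i-1,k}\leq\mathbf t$ for all relevant $k$, which closes the induction. This lemma is the main obstacle. I expect to prove it by contradiction from minimality and exactness: were $\ab_{ij}(s)>\max_k\ab_{i-1,k}(s)$ for some coordinate $s$, then every entry of the column $\partial(f_{ij})$ would be divisible by $x_s$, so $\partial(f_{ij})=x_s g$ with $g\in F_{i-1}$ homogeneous; as $S$ is a domain $\partial(g)=0$, and exactness at $F_{i-1}$ (valid since $i-1\geq 1$) gives $g=\partial(h)$ with $h\in F_i$. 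Then $f_{ij}-x_s h\in\ker\partial_i$, which is $\Im\partial_{i+1}\subset\mm F_i$ if $i<p$ and is $0$ if $i=p$; in either case $f_{ij}\in\mm F_i$, contradicting that $f_{ij}$ belongs to a minimal basis.

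Granting the key lemma, the conclusion follows at once. If every $t_s\geq 0$, set $\mathbf t\in\NN^n$; the equivalence $\xb^\cb\in L_{ij}\iff\ab_{ij}\leq\mathbf t$ matches a basis of $(F^*_i)_\cb$ (for $i\geq 1$) with a basis of $(F_i)_{\mathbf t}$, while at $i=0$ both $(F^*_0)_\cb=T_\cb$ and $(F_0)_{\mathbf t}=S_{\mathbf t}$ are one-dimensional; since both differentials are governed by the same scalar matrices $\lambda^{(i)}$ (with $\lambda^{(1)}=(1,\dots,1)$ at the bottom), $\FF^*_\cb$ is isomorphic to the degree-$\mathbf t$ strand $\FF_{\mathbf t}$. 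Because $\FF$ is a resolution, $\FF_{\mathbf t}$ is exact in homological degrees $\geq 1$, giving $H_i(\FF^*_\cb)=0$ there. If instead some $t_s=-\infty$, then $\ab_{ij}\not\leq\mathbf t$ for all $i\geq 1$ and $j$, so $(F^*_i)_\cb=0$ for $i\geq 1$ and $\FF^*_\cb$ is concentrated in degree $0$; again $H_i(\FF^*_\cb)=0$ for $i\geq 1$. Assembling the strands shows $H_i(\FF^*)=0$ for $i\geq 1$, i.e.\ $\FF^*$ is acyclic; together with Lemma~\ref{easyforroya} this is the assertion of the theorem. As a consistency check, $H_0(\FF^*_\cb)$ equals $(S/I)_{\mathbf t}$ in the first case and $K$ in the second, and in both $\xb^{\mathbf t}\in I\iff\ab_j\leq\mathbf t$ for some $j\iff\xb^\cb\in L$, so $H_0(\FF^*_\cb)=(T/L)_\cb$, recovering Lemma~\ref{easyforroya}.
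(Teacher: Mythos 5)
Your proposal is correct, and it organizes the argument in a genuinely different way from the paper, even though both rest on the same cornerstone. The shared ingredient is your ``key lemma'' that $\xb^{\ab_{ij}}=\lcm(\xb^{\ab_{i-1,k_1}},\ldots,\xb^{\ab_{i-1,k_r}})$ over the indices with nonzero scalar entries: this is exactly the paper's Lemma~\ref{shifts}, and your contradiction via minimality and exactness is the same argument (you factor out a single variable $x_s$ where the paper factors out the whole cofactor $\xb^{\db}$ --- an immaterial difference). Where you diverge is in how acyclicity is then extracted. The paper first proves the product formula $L_{ij}=\prod_l L_{l,\ab_{ij}(l)}$ (Corollary~\ref{lij}), then, for a fixed monomial cycle $\mu g$, lifts the scalar vector $\mu$ through the scalar complex with degree control (Lemma~\ref{needed}, which invokes exactness of a strand $\FF_{\db}$ with $\db$ depending on the support of $\mu$), and finally verifies $g\in L_{i+1,j}$ by an inclusion chase through the intersections. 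You instead attach to each fine multidegree $\cb$ of $T$ a single threshold vector $\mathbf{t}$ and prove the membership criterion $\xb^{\cb}\in L_{ij}\iff \ab_{ij}\leq \mathbf{t}$ by induction straight from the intersection definition (\ref{intersection}) and the lcm lemma, thereby exhibiting the entire strand $(\FF^*)_{\cb}$ as isomorphic to the strand $\FF_{\mathbf{t}}$ of the original resolution (or as concentrated in homological degree $0$ when some $t_s=-\infty$). Exactness in positive degrees is then immediate, $H_0$ is recovered for free, and you bypass Corollary~\ref{lij} and Lemma~\ref{needed} entirely. Your version is the more structural one: it describes every graded component of $\FF^*$ rather than just producing a preimage for each cycle, and all the small points that need checking (that the differentials of both strands are given by the same restricted scalar matrices, that $r\geq 1$ in every column by minimality, and that the disjointness of the variable groups makes membership in $L_j$ factor coordinatewise) do hold.
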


For the proof of this theorem we shall need:

\begin{Lemma}
\label{shifts}
Let $\FF$ be the $\ZZ^n$-graded minimal free $S$-resolution of $S/I$ as described in {\em (\ref{res})}. Given $1< i\leq p$ and $1\leq j\leq \beta_i$, let $k_1,k_2,\ldots,k_r$ be the integers for which $\lambda^{(i)}_{k_tj}\neq 0$. Then
$
\xb^{\ab_{ij}}=\lcm(\xb^{\ab_{i-1,k_1}}, \ldots, \xb^{\ab_{i-1,k_r}}).
$
\end{Lemma}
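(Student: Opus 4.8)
The plan is to establish the two divisibility relations
\[
\lcm(\xb^{\ab_{i-1,k_1}},\ldots,\xb^{\ab_{i-1,k_r}})\mid \xb^{\ab_{ij}}
\quad\text{and}\quad
\xb^{\ab_{ij}}\mid \lcm(\xb^{\ab_{i-1,k_1}},\ldots,\xb^{\ab_{i-1,k_r}}),
\]
whose conjunction gives the asserted equality. The first relation is immediate: for each $t$ we have $\lambda^{(i)}_{k_tj}\neq 0$, so by the convention stated after (\ref{def}) the exponent $\ab_{ij}-\ab_{i-1,k_t}$ lies in $\NN^n$; that is, $\ab_{ij}\geq \ab_{i-1,k_t}$ coordinatewise. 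Hence $\xb^{\ab_{i-1,k_t}}$ divides $\xb^{\ab_{ij}}$ for every $t$, and therefore so does their least common multiple.

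For the reverse divisibility I would argue by contradiction. Since the lcm corresponds to the coordinatewise maximum of exponents, a failure of this divisibility produces an index $s\in\{1,\ldots,n\}$ with $\ab_{ij}(s)>\ab_{i-1,k_t}(s)$ for every $t$. Then each nonzero entry $\lambda^{(i)}_{k_tj}\xb^{\ab_{ij}-\ab_{i-1,k_t}}$ of $\partial(f_{ij})$ is divisible by $x_s$, so $\partial(f_{ij})=x_s w$ for some $w\in F_{i-1}$. The crucial step is to promote $w$ to a cycle: since $\FF$ is a complex, $x_s\partial(w)=\partial(x_sw)=\partial\partial(f_{ij})=0$, and because $S$ is a domain and $F_{i-2}$ is torsion free we conclude $\partial(w)=0$. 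As $1<i$, exactness of $\FF$ at $F_{i-1}$ gives $w=\partial(w')$ for some $w'\in F_i$, whence
\[
\partial(f_{ij}-x_sw')=\partial(f_{ij})-x_s\partial(w')=x_sw-x_sw=0.
\]

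It then remains to extract a contradiction from minimality. The cycle $f_{ij}-x_sw'\in\Ker\partial\subset F_i$ has $f_{ij}$-coefficient equal to $1-x_sc$, where $c$ is the $f_{ij}$-coefficient of $w'$; this has constant term $1$ and hence does not lie in $\mm$. If $i=p$, the map $\partial\:F_p\to F_{p-1}$ is injective, so $f_{ij}-x_sw'=0$, contradicting that its $f_{ij}$-coefficient is nonzero. If $i<p$, then $f_{ij}-x_sw'\in\Im\partial\subset\mm F_i$ by minimality of $\FF$, again contradicting that its $f_{ij}$-coefficient avoids $\mm$. In either case the strict inequality is impossible, so $\ab_{ij}(s)=\max_t\ab_{i-1,k_t}(s)$ for all $s$, which is precisely the reverse divisibility. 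I expect the only delicate point to be this final appeal to minimality, together with the domain argument that turns $w$ into a cycle; the split between $i=p$ and $i<p$ is routine bookkeeping.
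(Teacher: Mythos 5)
Your proposal is correct and follows essentially the same route as the paper's proof: after the easy divisibility, both arguments divide $\partial(f_{ij})$ by a nontrivial monomial factor (you use a single variable $x_s$, the paper uses the full quotient $\xb^{\db}=\xb^{\ab_{ij}}/\lcm(\cdots)$), lift the resulting cycle through exactness at $F_{i-1}$, and contradict minimality because $f_{ij}$ minus a multiple of the lift is a cycle outside $\mm F_i$. Your version merely makes explicit two points the paper leaves implicit, namely the torsion-freeness argument showing the divided element is a cycle and the case $i=p$.
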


\begin{proof}
We have $\partial(f_{ij})=\sum_{t=1}^r\lambda^{(i)}_{k_tj}\xb^{\ab_{ij}-\ab_{i-1,k_t}}f_{i-1,k_t}$. Therefore, $\xb^{\ab_{ij}-\ab_{i-1,k_t}}\in S$ and this implies $\xb^{\ab_{i-1,k_t}}$ divides $\xb^{\ab_{ij}}$, and hence $\lcm(\xb^{\ab_{i-1,k_1}}, \ldots, \xb^{\ab_{i-1,k_r}})$ divides $\xb^{\ab_{ij}}$. Let
\[
\xb^{\db}=\xb^{\ab_{ij}}/\lcm(\xb^{\ab_{i-1,k_1}}, \ldots, \xb^{\ab_{i-1,k_r}}),
\]
and suppose that $\xb^{\db}\neq 1$. Since $g=\sum_{t=1}^r\lambda^{(i)}_{k_tj}(\xb^{\ab_{ij}-\ab_{i-1,k_t}}/\xb^{\db})f_{i-1,k_t}$ belongs to $\Ker \partial$, there exists $f\in F_i$ with $\partial(f)=g$. It follows that $\partial(\xb^{\db}f-f_{ij})=0$. This implies that $\xb^{\db}f-f_{ij}\in \Ker \partial\subset \mm F_i$, since $\FF$ is minimal. Since $f_{ij}\not\in \mm F_i$, but $\xb^{\db}f\in \mm F_i$, it follows that $\xb^{\db}f-f_{ij}\not\in \mm F_i$, a   contradiction.
\end{proof}

As  a consequence of this lemma one easily obtains by induction on $i$ the following result.

\begin{Corollary}
\label{easily}
For all integers $1\leq i\leq p$, $1\leq j\leq\beta_i$ and $1\leq l\leq n$ there exists an integer  $1\leq k\leq \beta_1$ such that $\ab_{ij}(l)=a_k(l)$. In particular, $L_{l,\ab_{ij}(l)}=L_{l,\ab_{k}(l)}$ is a monomial ideal in the variables $x_{l1},x_{l2},\ldots,x_{lm_l}$ and
\begin{eqnarray}
\label{contained}
L_{l,\ab_{i_1j_1}(l)}\subset L_{l,\ab_{i_2j_2}(l)}\quad \text{if}\quad \ab_{i_1j_1}(l)\geq \ab_{i_2j_2}(l).
\end{eqnarray}
\end{Corollary}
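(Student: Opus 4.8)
The plan is to establish the first claim by induction on $i$, reading Lemma~\ref{shifts} coordinate by coordinate, and then to read off the two remaining assertions directly from condition (\ref{inclusion}). The essential observation is that the exponent vector of a least common multiple of monomials is the coordinate-wise maximum of the individual exponent vectors. Thus, writing out Lemma~\ref{shifts} in each coordinate $l$, the identity $\xb^{\ab_{ij}}=\lcm(\xb^{\ab_{i-1,k_1}},\ldots,\xb^{\ab_{i-1,k_r}})$ becomes
\[
\ab_{ij}(l)=\max_{1\leq t\leq r}\ab_{i-1,k_t}(l),
\]
where $k_1,\ldots,k_r$ are the indices with $\lambda^{(i)}_{k_tj}\neq 0$.

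For the induction, the base case $i=1$ is immediate: since $\ab_{1j}=\ab_j$, the index $k=j$ satisfies $\ab_{1j}(l)=\ab_k(l)$ for every $l$. For the inductive step, I would assume the claim for $i-1$ and fix $j$ and $l$. By the displayed formula, $\ab_{ij}(l)=\ab_{i-1,k_t}(l)$ for the index $t$ attaining the maximum; applying the induction hypothesis to $\ab_{i-1,k_t}$ produces an integer $1\leq k\leq\beta_1$ with $\ab_{i-1,k_t}(l)=\ab_k(l)$, and hence $\ab_{ij}(l)=\ab_k(l)$, as required.

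The two \emph{in particular} statements then follow formally. Since $\ab_{ij}(l)=\ab_k(l)$ is a value that actually occurs among the exponents of the minimal generators of $I$, the ideal $L_{l,\ab_{ij}(l)}$ is one of the ideals given at the outset and equals $L_{l,\ab_k(l)}$. For the containment (\ref{contained}), I would use the first claim to write $\ab_{i_1j_1}(l)=\ab_{k_1}(l)$ and $\ab_{i_2j_2}(l)=\ab_{k_2}(l)$ for suitable generator indices $k_1,k_2$; the hypothesis $\ab_{i_1j_1}(l)\geq\ab_{i_2j_2}(l)$ then reads $\ab_{k_1}(l)\geq\ab_{k_2}(l)$, so condition (\ref{inclusion}) gives $L_{l,\ab_{k_1}(l)}\subset L_{l,\ab_{k_2}(l)}$, which is exactly the desired inclusion. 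The argument has no serious obstacle; the only point requiring care is the bookkeeping that lets one replace the maximum over several indices $k_t$ by a single generator index $k$, together with the identification $\ab_{1k}=\ab_k$ used in the base case.
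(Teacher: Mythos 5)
Your proof is correct and follows exactly the route the paper intends: the paper gives no written proof, stating only that the corollary "is easily obtained by induction on $i$" from Lemma~\ref{shifts}, and your argument — reading the lcm identity coordinatewise as $\ab_{ij}(l)=\max_t \ab_{i-1,k_t}(l)$, inducting on $i$ with base case $\ab_{1j}=\ab_j$, and then invoking condition (\ref{inclusion}) for the containment — is precisely that induction carried out in detail.
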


\begin{Corollary}
\label{lij}
For the choice of the ideals $L_j=\prod_{i=1}^nL_{i, \ab_j(i)}$ as given in Theorem~\ref{main},  the ideals $L_{ij}$ defined in {\em (\ref{intersection})} are presented as
\[
L_{ij}  =\prod_{l=1}^nL_{l, \ab_{ij}(l)}.
\]
\end{Corollary}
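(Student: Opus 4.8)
The plan is to argue by induction on $i$. For the base case $i=1$ we have $\ab_{1j}=\ab_j$ and $L_{1j}=L_j=\prod_{l=1}^nL_{l,\ab_j(l)}=\prod_{l=1}^nL_{l,\ab_{1j}(l)}$, directly from the choice of the ideals $L_j$ in Theorem~\ref{main}, so the formula holds. For the inductive step I would assume the presentation $L_{i-1,k}=\prod_{l=1}^nL_{l,\ab_{i-1,k}(l)}$ for all $k$, fix $j$ with $1\le j\le\beta_i$, and let $k_1,\dots,k_r$ be the indices with $\lambda^{(i)}_{k_tj}\neq0$, so that by definition {\em(\ref{intersection})} we have $L_{ij}=\Sect_{t=1}^rL_{i-1,k_t}$.

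Substituting the inductive hypothesis gives $L_{ij}=\Sect_{t=1}^r\prod_{l=1}^nL_{l,\ab_{i-1,k_t}(l)}$, and the heart of the argument is to interchange this intersection with the product. This is where the disjointness of the variable sets enters: since $L_{l,\cdot}\subset T_l$ and $T=T_1\tensor_K\cdots\tensor_K T_n$, every monomial $m\in T$ factors uniquely as $m=m_1\cdots m_n$ with $m_l\in T_l$, and one checks that $m\in\prod_lA_l$ for monomial ideals $A_l\subset T_l$ if and only if $m_l\in A_l$ for every $l$. From this membership criterion it follows that
\[
\Sect_{t=1}^r\prod_{l=1}^nL_{l,\ab_{i-1,k_t}(l)}=\prod_{l=1}^n\Bigl(\Sect_{t=1}^rL_{l,\ab_{i-1,k_t}(l)}\Bigr),
\]
reducing the problem to computing, for each fixed $l$, the intersection of the ideals $L_{l,\ab_{i-1,k_t}(l)}$ inside the single ring $T_l$.

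To evaluate the inner intersection I would invoke Lemma~\ref{shifts}, which gives $\xb^{\ab_{ij}}=\lcm(\xb^{\ab_{i-1,k_1}},\dots,\xb^{\ab_{i-1,k_r}})$, that is, $\ab_{ij}(l)=\max_t\ab_{i-1,k_t}(l)$ for every $l$. By Corollary~\ref{easily} each exponent $\ab_{i-1,k_t}(l)$ is one of the values occurring among the generators, so the ideals $L_{l,\ab_{i-1,k_t}(l)}$ are defined and, by the monotonicity {\em(\ref{contained})}, form a chain whose smallest member is the one with the largest exponent. Hence $\Sect_{t=1}^rL_{l,\ab_{i-1,k_t}(l)}=L_{l,\max_t\ab_{i-1,k_t}(l)}=L_{l,\ab_{ij}(l)}$, and combining this with the interchange above yields $L_{ij}=\prod_{l=1}^nL_{l,\ab_{ij}(l)}$, completing the induction.

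The main obstacle is the interchange of intersection and product; everything else is bookkeeping once Lemma~\ref{shifts} is available. The interchange itself is not deep, but it relies essentially on the variables of the different $T_l$ being pairwise disjoint, and it is the one place where the specific product structure of the ideals $L_j$, rather than the arbitrary ideals allowed in the general construction of $\FF^*$, is actually used.
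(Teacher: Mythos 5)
Your proof is correct and follows essentially the same route as the paper: induction on $i$, substitution of the inductive hypothesis into the intersection (\ref{intersection}), an interchange justified by the disjointness of the variable sets of the $T_l$, and the identification $\Sect_{t}L_{l,\ab_{i-1,k_t}(l)}=L_{l,\ab_{ij}(l)}$ via Lemma~\ref{shifts} and (\ref{contained}). The only cosmetic difference is that the paper first rewrites each product $\prod_l$ as $\Sect_l$ and then swaps the two intersections, whereas you interchange $\Sect_t$ with $\prod_l$ directly via a monomial membership criterion; the underlying justification is identical.
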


\begin{proof}
We prove the assertion by induction on $i$. For $i=1$, we have $L_{1j}=L_j=\prod_{i=1}^nL_{i, \ab_j(i)}=\prod_{i=1}^nL_{i, \ab_{1j}(i)}$. Now let $i>1$, and assume that the statement holds for $i-1$. By definition (\ref{intersection}) and our induction hypothesis we have
\[
L_{ij}  =\Sect_{t=1}^r L_{i-1,k_t}= \Sect_{t=1}^r \prod_{l=1}^nL_{l, \ab_{i-1,k_t}(l)}.
\]
By Corollary~\ref{easily} it follows  that $L_{l, \ab_{i-1,k_t}}(l)$ is a monomial ideal in the variables  $x_{l1},x_{l2},\ldots,x_{lm_l}$. Therefore, $\prod_{l=1}^nL_{l, \ab_{i-1,k_t}(l)}=\Sect_{t=1}^rL_{l, \ab_{i-1,k_t}}(l)$. Hence
\[
L_{ij}= \Sect_{t=1}^r\Sect_{l=1}^n L_{l, \ab_{i-1,k_t}(l)}=\Sect_{l=1}^n\Sect_{t=1}^rL_{l, \ab_{i-1,k_t}(l)}=\Sect_{l=1}^n L_{l, \ab_{ij}(l)}=\prod_{l=1}^nL_{l, \ab_{ij}(l)}.
\]
The third equation follows from (\ref{contained}) and the fact that $\ab_{ij}(l)=\max\{\ab_{i-1,k_1}(l),\ldots,\ab_{i-1,k_t}(l)\}$, see Lemma~\ref{shifts}.
\end{proof}

We need one more lemma  before we can give the proof of Theorem~\ref{main}.

\begin{Lemma}
\label{needed}
Let $\FF$ be the $\ZZ^n$-graded minimal free $S$-resolution of $S/I$ as described in {\em (\ref{res})}, and let
\[
\mu= \left( \begin{array}{c}
\mu_1 \\
\mu_2 \\
\vdots\\
\mu_{\beta_i}
\end{array} \right)
\in \Ker \lambda^{(i)}.
\]
Let $1\leq k_1<k_2<\cdots <k_r\leq \beta_i$ be the integers with $\mu_{i_{k_t}}\neq 0$ for $t=1,\ldots,r$.

Then there exists
\[
\rho=\left( \begin{array}{c}
\rho_1 \\
\rho_2 \\
\vdots\\
\rho_{\beta_{i+1}}
\end{array} \right)
\]
with the property  that $\lambda^{(i+1)}\rho=\mu$ and $\xb^{\ab_{i+1,j}}|\lcm(\xb^{\ab_{ik_1}},\ldots, \xb^{\ab_{ik_r}})$ for all $j$ with $\rho_j\neq 0$.
\end{Lemma}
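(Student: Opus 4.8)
The plan is to realize the purely numerical relation $\mu\in\Ker\lambda^{(i)}$ as an honest $\ZZ^n$-graded cycle in $\FF$, to lift that cycle one homological step, and then to recover $\rho$ from the monomial coefficients of the lift. The existence of \emph{some} preimage $\rho$ with $\lambda^{(i+1)}\rho=\mu$ is already guaranteed by the exactness of the scalar complex $\bar{\FF}$ (Lemma~\ref{scalar}); the whole content of the lemma is the divisibility constraint on the support of $\rho$, and this is exactly what the multigrading will enforce.

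First I would set $\cb=(c_1,\dots,c_n)$ with $c_l=\max_{1\le t\le r}\ab_{ik_t}(l)$, so that $\xb^{\cb}=\lcm(\xb^{\ab_{ik_1}},\dots,\xb^{\ab_{ik_r}})$ and $\cb-\ab_{ik_t}\in\NN^n$ for every $t$. Using these data I form the homogeneous element
\[
z=\sum_{t=1}^r\mu_{k_t}\,\xb^{\cb-\ab_{ik_t}}f_{ik_t}\in F_i,
\]
which has $\ZZ^n$-degree $\cb$. Applying $\partial$ and using (\ref{def}), the coefficient of $f_{i-1,k}$ in $\partial(z)$ works out to $(\lambda^{(i)}\mu)_k\,\xb^{\cb-\ab_{i-1,k}}$, which vanishes because $\mu\in\Ker\lambda^{(i)}$. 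Hence $z$ is a cycle.

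Since $\FF$ is a resolution it is exact in homological degrees $\ge 1$, so $z\in\Ker\partial=\Im\partial$; as $\partial$ is $\ZZ^n$-graded and $z$ is homogeneous of degree $\cb$, I may choose a homogeneous preimage $w\in F_{i+1}$ of degree $\cb$ with $\partial(w)=z$. Writing $w=\sum_j w_jf_{i+1,j}$, homogeneity forces each $w_j$ to be homogeneous of degree $\cb-\ab_{i+1,j}$ in $S$, hence $w_j=c_j\xb^{\cb-\ab_{i+1,j}}$ with $c_j\in K$, where $c_j=0$ unless $\cb-\ab_{i+1,j}\in\NN^n$, i.e.\ unless $\ab_{i+1,j}\le\cb$. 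Taking $\rho$ to be the column vector with entries $c_1,\dots,c_{\beta_{i+1}}$, the implication $\rho_j\neq 0\Rightarrow \ab_{i+1,j}\le\cb$ says precisely that $\xb^{\ab_{i+1,j}}$ divides $\xb^{\cb}=\lcm(\xb^{\ab_{ik_1}},\dots,\xb^{\ab_{ik_r}})$, which is the asserted support condition.

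It remains to verify $\lambda^{(i+1)}\rho=\mu$, and this coefficient comparison is the step I expect to require the most care. Expanding $\partial(w)=z$ and collecting the coefficient of each $f_{ik}$ gives $(\lambda^{(i+1)}\rho)_k\,\xb^{\cb-\ab_{ik}}$ on the left and $\mu_k\,\xb^{\cb-\ab_{ik}}$ on the right. For indices $k$ with $\cb-\ab_{ik}\in\NN^n$ the monomial $\xb^{\cb-\ab_{ik}}$ is nonzero and one reads off $(\lambda^{(i+1)}\rho)_k=\mu_k$ directly; for the remaining indices one checks that every nonzero product $c_j\lambda^{(i+1)}_{kj}$ already forces $\cb-\ab_{ik}\in\NN^n$ (combine $c_j\neq 0\Rightarrow\cb-\ab_{i+1,j}\in\NN^n$ with $\lambda^{(i+1)}_{kj}\neq 0\Rightarrow\ab_{i+1,j}-\ab_{ik}\in\NN^n$), so that both $(\lambda^{(i+1)}\rho)_k$ and $\mu_k$ vanish there. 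Thus $\lambda^{(i+1)}\rho=\mu$, completing the argument.
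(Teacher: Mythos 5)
Your proof is correct and follows essentially the same route as the paper: both realize $\mu$ as the homogeneous cycle $\sum_t\mu_{k_t}\xb^{\db-\ab_{ik_t}}f_{ik_t}$ in the degree-$\db$ strand of $\FF$ (with $\xb^{\db}$ the lcm), lift it using exactness of that strand, and read off $\rho$ and the divisibility condition from the homogeneous coefficients of the lift. Your extra care in the final coefficient comparison (checking that both sides vanish for indices $k$ with $\ab_{ik}\not\leq\db$) fills in a detail the paper leaves implicit, but it is the same argument.
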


\begin{proof}
Let $\xb^\db=\lcm(\xb^{\ab_{ik_1}},\ldots, \xb^{\ab_{ik_r}})$. Then $\mu \xb^\db\in \Ker((F_i)_\db\to (F_{i-1})_\db)$. Since the ${\db}$th component $\FF_\db$ of $\FF$ is exact, there exists $\rho\in K^{\beta_{i+1}}$ such that $\partial(\rho \xb^{\db})=\mu\xb^{\db}$. It follows that $\lambda^{(i+1)}\rho=\mu$. Since $\rho\xb^{\db}=\sum_{j=1}^{\beta_{i+1}}\rho_j\xb^{\db-\ab_{i+1,j}}f_{i+1,j}$, it follows that $\xb^{\ab_{i+1,j}}$ divides $\xb^\db$ for all $j$ with $\rho_j\neq 0$, as desired.
\end{proof}

\begin{proof}[Proof of Theorem~\ref{main}]
Since the complex $\FF^*$ is a complex of $\ZZ^q$-graded $T$-module, where $q$ is the number of variables of $T$ and each $F_i^*$ is a direct sum of monomial ideals of $T$ it suffices to show that if $g$ is a monomial of $T$ and $\mu$ an element of $K^{\beta_i}$ such that $\mu g\in \Ker(F_i^*\to F_{i-1}^*)$,  then there  exists $\rho\in K^{\beta_{i+1}}$ such that $\rho g\in F_{i+1}^*$ with $\partial^*(\rho g)=\mu g$.

We have $0=\partial^*(\mu g)=(\lambda^{(i)}\mu)g$. Therefore $\lambda^{(i)}\mu=0$. We now choose $\rho$ as in Lemma~\ref{needed}. We claim that $g\in L_{i+1,j}$ for all $j$ with $\rho_j\neq 0$, then $\rho g\in F_{i+1}^*$, and since $\lambda^{(i+1)}\rho=\mu$ it then follows that $\partial^*(\rho g)=\mu g$. Thus the theorem follows once we have proved the claim.

Let $1\leq k_1<k_2<\cdots <k_r\leq \beta_i$ be the integers with $\mu_{k_t}\neq 0$ for $t=1,\ldots,r$. Then $g\in L_{i,k_t}$ for $t=1\ldots,r$. We fix $j$ with $\rho_j\neq 0$. Then, by using Corollary~\ref{lij} and Lemma~\ref{needed} and Corollary~\ref{easily}  we see that
\begin{eqnarray*}
g\in \Sect_{t=1}^rL_{i,k_t}&=&\Sect_{t=1}^r\prod_{l=1}^nL_{l,\ab_{i,k_t(l)}}=\Sect_{t=1}^r\Sect_{l=1}^nL_{l,\ab_{i,k_t(l)}}\\
&=&\Sect_{l=1}^n\Sect_{t=1}^rL_{l,\ab_{i,k_t(l)}}\subset \Sect_{l=1}^nL_{l, \ab_{i+1,j}(l)}=\prod_{l=1}^nL_{l, \ab_{i+1,j}(l)}=L_{i+1,j}.
\end{eqnarray*}
\end{proof}

\section{On the regularity of generalized mixed ideals}

Let $I\subset S=K[x_1,\ldots,x_n]$ be a monomial ideal as in Section 1 with $G(I)=\{\xb^{\ab_1},\ldots, \xb^{\ab_m}\}$, and $L$ be defined as in $(\ref{lj})$.
In this section we will always assume that for $l=1,\dots,n$ and $j=1,\ldots,m$ the ideals  $L_{l,\ab_j(l)}$  have an $\ab_j(l)$-linear resolution.

The main result of this section is the following:

\begin{Theorem}
\label{regularity}
With the notation and the assumptions introduced, we have $$\reg I=\reg L.$$
\end{Theorem}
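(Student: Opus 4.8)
The plan is to construct the minimal graded free resolution of $T/L$ explicitly, as the total complex of a double complex, and then read off its shifts. Since $\reg I=\reg(S/I)+1$ and $\reg L=\reg(T/L)+1$, it suffices to prove $\reg(T/L)=\reg(S/I)$. By Theorem~\ref{main} the complex $\FF^*$ is an acyclic complex of $T$-modules with $H_0(\FF^*)=T/L$, whose term in homological degree $i$ is $\Dirsum_{j=1}^{\beta_i}L_{ij}$ (with $F_0^*=T$). I would resolve each module $L_{ij}$ by its minimal graded free $T$-resolution $G_{ij,\bullet}$ and patch these resolutions together along a chain-map lift of $\partial^*$, obtaining a double complex whose total complex $\Tot$ is a graded free resolution of $T/L$. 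The argument then splits into two points: identifying the internal degrees of the generators of each $G_{ij,\bullet}$, and checking that $\Tot$ is minimal.

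For the first point I use Corollary~\ref{lij}, which gives $L_{ij}=\prod_{l=1}^nL_{l,\ab_{ij}(l)}$. By Corollary~\ref{easily} each factor $L_{l,\ab_{ij}(l)}$ is one of the ideals of the standing hypothesis and hence has an $\ab_{ij}(l)$-linear resolution. Since the factors live in the pairwise disjoint variable sets $T_1,\ldots,T_n$ and $T=T_1\tensor_K\cdots\tensor_K T_n$, this product is the $K$-tensor product of the factors, so by the K\"unneth formula its minimal free $T$-resolution $G_{ij,\bullet}$ is the tensor product over $K$ of the individual minimal linear resolutions. A tensor product of linear resolutions is again linear, with the degrees adding, so $L_{ij}$ has a $|\ab_{ij}|$-linear resolution, where $|\ab_{ij}|=\sum_{l=1}^n\ab_{ij}(l)$. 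In particular $G_{ij,k}$ is generated in the single internal degree $|\ab_{ij}|+k$.

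For the second point I would show that every entry of the differential of $\Tot$ lies in the graded maximal ideal $\mm$ of $T$. The vertical differentials are minimal because each $G_{ij,\bullet}$ is a minimal resolution. For the horizontal ones, note that whenever $\lambda^{(i)}_{kj}\neq 0$ Lemma~\ref{shifts} forces $\xb^{\ab_{i-1,k}}$ to strictly divide $\xb^{\ab_{ij}}$ (as $\FF$ is minimal), whence $|\ab_{i-1,k}|<|\ab_{ij}|$; moreover $L_{ij}\subset L_{i-1,k}$ by $(\ref{contained})$. The horizontal map $G_{ij,\bullet}\to G_{i-1,k,\bullet}$ lifts this inclusion, carrying generators of internal degree $|\ab_{ij}|$ into a module generated in the strictly smaller degree $|\ab_{i-1,k}|$, so its matrix entries have positive degree and lie in $\mm$. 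Thus all entries of the total differential lie in $\mm$, and since $\Tot$ is already known to be a free resolution of $T/L$, it is the minimal one. I expect this minimality verification, and in particular the check that the lifted horizontal maps strictly raise internal degree, to be the technical heart of the argument.

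Finally I read off the regularity. In $\Tot$ the generators coming from $G_{ij,k}$ sit in homological degree $i+k$ and internal degree $|\ab_{ij}|+k$, contributing $|\ab_{ij}|+k-(i+k)=|\ab_{ij}|-i$ to $\reg(T/L)$, while the term $F_0^*=T$ contributes $0$. As $\Tot$ is minimal, this yields $\reg(T/L)=\max_{i,j}\bigl(|\ab_{ij}|-i\bigr)$. On the other hand, the minimal resolution $\FF$ of $S/I$ has $F_i=\Dirsum_jS(-\ab_{ij})$, so $\reg(S/I)=\max_{i,j}\bigl(|\ab_{ij}|-i\bigr)$ as well. Hence $\reg(T/L)=\reg(S/I)$, and therefore $\reg L=\reg I$, as claimed.
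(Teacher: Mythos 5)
Your overall strategy is the same as the paper's: resolve each $L_{ij}$ minimally by the tensor product of the linear resolutions of its factors $L_{l,\ab_{ij}(l)}$, assemble these resolutions into a double complex lying over the acyclic complex $\FF^*$ of Theorem~\ref{main}, show that the total complex is the minimal graded free resolution of $T/L$, and read off $\reg(T/L)=\max_{i,j}(|\ab_{ij}|-i)=\reg(S/I)$. Your degree bookkeeping at the end and your minimality argument (linearity of the factor resolutions together with $|\ab_{i-1,k}|<|\ab_{ij}|$ whenever $\lambda^{(i)}_{kj}\neq 0$) both match the paper and are correct.

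The one genuine gap is the sentence ``patch these resolutions together along a chain-map lift of $\partial^*$, obtaining a double complex.'' Lifting each map $\partial^*\colon F_i^*\to F_{i-1}^*$ to a chain map $\sigma_i\colon \GG^{(i)}\to\GG^{(i-1)}$ of resolutions is standard, but the composite $\sigma_{i-1}\circ\sigma_i$ of two such lifts is only guaranteed to be null-homotopic, not zero; without $\sigma_{i-1}\circ\sigma_i=0$ you do not have a double complex, and its ``total complex'' is not even a complex. This, rather than the minimality check you flag as the technical heart, is where the paper does most of its work: the lifts are chosen coherently as composites of fixed comparison maps $\rho^{(l,d_{lk})}$ between the resolutions of the nested ideals $L_{l,d_{lk}}\subset L_{l,d_{l,k-1}}$, so that the component of $\sigma_{i-1}\circ\sigma_i$ from $\GG^{(ij)}$ to $\GG^{(i-2,q)}$ factors as $(\sum_k\lambda^{(i-1)}_{qk}\lambda^{(i)}_{kj})\alpha^{(qj)}$ with $\alpha^{(qj)}$ independent of $k$, and this vanishes because the scalar matrices satisfy $\lambda^{(i-1)}\lambda^{(i)}=0$. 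Alternatively one could salvage your outline by allowing higher homotopies (a Cartan--Eilenberg-type twisted total complex); the same degree count shows those extra components also land in $\nn$ times their targets, so minimality and hence the regularity computation would survive --- but either way the existence of an honest complex has to be argued, and as written your proof omits it.
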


\cite[Theorem 2.8]{IR} of Ionescu and Rinaldo turns to be out a very special case of this theorem and is obtained from the next corollary in the case that both summands of $L$  have only two factors.

\begin{Corollary}
\label{giancarlo}
Let $L=I_1I_2\cdots I_n+J_1J_2\cdots J_n$ with $I_k$ and $J_k$ in $K[x_{k_1},\ldots,x_{km_k}]$ monomial ideals with linear resolution. Suppose that for $k=1,\ldots,n$ the ideal $I_k$ has a $d_k$-linear resolution and $J_k$ a $\delta_k$-linear resolution. Assume further that $I_k\subset J_k$ if $d_k\geq \delta_k$ and that $J_k\subset I_k$ if $\delta_k\geq d_k$.  Then
\[
\reg L=\sum_{k=1}^n\max\{d_k,\delta_k\}-1.
\]
\end{Corollary}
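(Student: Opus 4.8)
The plan is to realize $L$ as a generalized mixed product ideal induced by a suitable two-generated monomial ideal $I$, and then read off $\reg L$ from $\reg I$ via Theorem~\ref{regularity}. Concretely, I set $\ab_1=(d_1,\ldots,d_n)$ and $\ab_2=(\delta_1,\ldots,\delta_n)$ in $\NN^n$ and take $I=(\xb^{\ab_1},\xb^{\ab_2})\subset S=K[x_1,\ldots,x_n]$, so that the variable $x_k$ of $S$ governs the block $T_k=K[x_{k1},\ldots,x_{km_k}]$. For each $k$ I put $L_{k,\ab_1(k)}=I_k$ and $L_{k,\ab_2(k)}=J_k$; this is unambiguous, since when $d_k=\delta_k$ the hypotheses $I_k\subset J_k$ and $J_k\subset I_k$ force $I_k=J_k$. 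The assumptions that $I_k\subset J_k$ whenever $d_k\geq\delta_k$ and $J_k\subset I_k$ whenever $\delta_k\geq d_k$ are precisely condition (\ref{inclusion}) for these data, and by hypothesis $L_{k,\ab_1(k)}=I_k$ has a $d_k$-linear and $L_{k,\ab_2(k)}=J_k$ a $\delta_k$-linear resolution, as required throughout this section. With these choices $L_1=\prod_{k=1}^nL_{k,\ab_1(k)}=I_1\cdots I_n$ and $L_2=\prod_{k=1}^nL_{k,\ab_2(k)}=J_1\cdots J_n$, so $L=L_1+L_2$ is exactly the generalized mixed product ideal induced by $I$.

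By Theorem~\ref{regularity} it then follows that $\reg L=\reg I$, and it remains to compute the regularity of the monomial ideal $I$ generated by the two monomials $u=\xb^{\ab_1}$ and $v=\xb^{\ab_2}$. Assuming (as is implicit in the statement) that neither of $u,v$ divides the other, they are the minimal generators of $I$ and the minimal $\ZZ^n$-graded free resolution of $I$ reads
\[
0\to S(-\cb)\to S(-\ab_1)\dirsum S(-\ab_2)\to I\to 0,\qquad \xb^{\cb}=\lcm(u,v)=\prod_{k=1}^nx_k^{\max\{d_k,\delta_k\}}.
\]
Reading off the shifts gives $\reg I=\max\{\sum_kd_k,\ \sum_k\delta_k,\ \sum_k\max\{d_k,\delta_k\}-1\}$; since incomparability forces $\sum_k\max\{d_k,\delta_k\}>\max\{\sum_kd_k,\sum_k\delta_k\}$, this maximum equals $\sum_{k=1}^n\max\{d_k,\delta_k\}-1$. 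Combined with $\reg L=\reg I$, this is the claimed formula.

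The identification of $L$ as a generalized mixed product ideal is routine bookkeeping, and the regularity of a two-generated monomial ideal is elementary; the one point I would flag as genuinely needing care is the incomparability of $u$ and $v$. It is exactly this that keeps the top syzygy of the two-generator resolution minimal and hence makes it govern $\reg I$. Indeed, if $d_k\geq\delta_k$ for every $k$, then $I_k\subset J_k$ for all $k$ forces $L=J_1\cdots J_n$, whose regularity is $\sum_k\delta_k$ and not the value in the displayed formula, so the hypotheses of the corollary must be read as including that the two monomial generators are incomparable.
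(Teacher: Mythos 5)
Your argument is essentially the paper's: it realizes $L$ as the generalized mixed product ideal induced by $I=(x_1^{d_1}\cdots x_n^{d_n},\,x_1^{\delta_1}\cdots x_n^{\delta_n})$ and invokes Theorem~\ref{regularity}, with the only difference that the paper simply asserts $\reg I=\sum_{k}\max\{d_k,\delta_k\}-1$ while you derive it from the two-generator resolution $0\to S(-\cb)\to S(-\ab_1)\dirsum S(-\ab_2)\to I\to 0$. Your caveat about incomparability of the two generators is correct and worth recording: the paper leaves this hypothesis implicit (it is the analogue of the conditions $0<p<q$, $0<r<s$ in the classical mixed product setting), and without it the displayed formula can fail, exactly as your example with $d_k\geq\delta_k$ for all $k$ shows.
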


\begin{proof}
The ideal $L$ is the generalized mixed ideal induced by the ideal $$I=(x_1^{d_1}x_2^{d_2}\cdots x_n^{d_n} , x_1^{\delta_1}x_2^{\delta_2}\cdots x_n^{\delta_n}).$$ Since $\reg I=\sum_{k=1}^n\max\{d_k,\delta_k\}-1$, the assertion follows from Theorem~\ref{regularity}.
\end{proof}

\begin{Corollary}
\label{linear}
The following conditions are equivalent:
\begin{enumerate}
\item[(a)] $L$ has a linear resolution;
\item[(b)] $I$ has a linear resolution.
\end{enumerate}
\end{Corollary}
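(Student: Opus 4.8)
The plan is to deduce the corollary from the equality $\reg I=\reg L$ furnished by Theorem~\ref{regularity}, together with the standard fact that a nonzero graded ideal $J$ has a linear resolution if and only if $J$ is generated in a single degree $d$ and $\reg J=d$. Granting this characterization, the assertion reduces to the claim that $I$ and $L$ have their minimal generators in the \emph{same} set of degrees: then $I$ is generated in a single degree $d$ precisely when $L$ is, and in that situation Theorem~\ref{regularity} yields $\reg I=d\iff \reg L=d$, which is exactly the desired equivalence.

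First I would record the degrees of the generators of $L$. Since each $L_{i,\ab_j(i)}$ has an $\ab_j(i)$-linear resolution it is generated in degree $\ab_j(i)$, so $L_j=\prod_{i=1}^n L_{i,\ab_j(i)}$ is generated in degree $\sum_{i=1}^n\ab_j(i)=|\ab_j|=\deg \xb^{\ab_j}$. Because the factors $L_{i,\ab_j(i)}$ live in the pairwise disjoint variable sets of the rings $T_i$, the minimal generators of the product $L_j$ are exactly the products $\prod_i g_i$ of minimal generators $g_i$ of the factors.

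Next I would show that every minimal generator $g=\prod_i g_i$ of $L_j$ survives as a minimal generator of $L=\sum_k L_k$. Since $G(\sum_k L_k)\subseteq \bigcup_k G(L_k)$, it suffices to verify that $g$ is divisible by no minimal generator $g'=\prod_i g'_i$ of any $L_k$ other than $g$ itself. As the variables of the distinct blocks $T_i$ are disjoint, $g'\mid g$ forces $g'_i\mid g_i$ in $T_i$ for every $i$, and comparing degrees gives $\ab_k(i)\le \ab_j(i)$ for all $i$, i.e. $\xb^{\ab_k}\mid \xb^{\ab_j}$. For $k\neq j$ this contradicts the minimality of $G(I)$, while for $k=j$ it forces equality of all degrees and hence $g'=g$. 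Thus $g$ is a minimal generator of $L$, and the set of degrees of the minimal generators of $L$ equals $\{|\ab_j|:j=1,\ldots,m\}$, which is precisely the set of generator degrees of $I$.

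Finally I would assemble the pieces: $I$ is generated in a single degree $d$ if and only if $L$ is, and then $\reg I=\reg L$ gives $\reg I=d\iff \reg L=d$, so $I$ has a linear resolution exactly when $L$ does. The only point that requires genuine care—and hence the main obstacle—is the third step: one must ensure that forming the sum $L=\sum_j L_j$ neither introduces generators in new degrees nor cancels the generators that pin down the generating degrees of $I$. The disjointness of the variable blocks $T_i$ together with the minimality of $G(I)$ is exactly what makes this control of generator degrees possible.
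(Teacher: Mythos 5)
Your proof is correct, and its overall skeleton is the same as the paper's: invoke Theorem~\ref{regularity} and reduce the equivalence to the statement that $I$ and $L$ have their minimal generators in the same set of degrees. The difference lies in how that reduction is justified. The paper obtains it homologically: by Lemma~\ref{akihiro} the map $\partial^*\: F_2^*\to F_1^*$ has image in $\nn F_1^*$, and combined with the acyclicity of $\FF^*$ (Theorem~\ref{main}) this gives $L/\nn L\iso \Dirsum_j L_j/\nn L_j$, so the minimal generators of $L$ are exactly the union of the minimal generators of the $L_j$, each $L_j$ being generated in degree $|\ab_j|$. You instead argue combinatorially and from scratch: since the factors $L_{i,\ab_j(i)}$ live in disjoint blocks of variables, a minimal generator $g'=\prod_i g'_i$ of $L_k$ dividing a minimal generator $g=\prod_i g_i$ of $L_j$ forces $\ab_k(i)\leq \ab_j(i)$ for all $i$, hence $\xb^{\ab_k}\mid\xb^{\ab_j}$, which by minimality of $G(I)$ forces $k=j$ and $g'=g$; so no generator of any $L_j$ is absorbed in the sum $L=\sum_k L_k$. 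Both arguments are sound. Yours is more elementary and self-contained (it needs neither Lemma~\ref{akihiro} nor the acyclicity of $\FF^*$, only the disjointness of the variable blocks and the minimality of $G(I)$), whereas the paper's version comes for free from machinery it has already built and yields slightly more, namely that the generators of the $L_j$ contribute to $L$ without any cancellation in the count, not just in the degrees. Both proofs, including yours, implicitly rely on the standard characterization that an ideal generated in a single degree $d$ has a linear resolution if and only if its regularity equals $d$; it would be worth stating that explicitly, as you do.
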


\begin{proof}
Because of Theorem~\ref{regularity} it suffices to show that $L$ is generated in degree $d$ if and only if $I$ is generated in degree $d$. To show this we use the fact that $\partial^*(F_2^*)\subset \nn F_1^*$ where $\nn$ is the graded maximal ideal of $T$, see Lemma~\ref{akihiro}. This then implies $\Dirsum_j L_j/\nn L_j\iso L/\nn L$. Our assumptions on the ideals $L_{i,\ab_{j(i)}}$ imply that $L_j$ is minimally generated in degree $|\ab_j|$. Hence  it follows that $L$ has generators exactly in the same degrees as $I$. Thus the desired conclusion follows.
\end{proof}

For the proof of Theorem~\ref{regularity} we need the following lemma.

\begin{Lemma}
\label{akihiro}
Let $\nn$  be the graded maximal ideal of $T$. Then $\partial^*(F_i^*)\subset \nn F_{i-1}^*$ for all $i>0$.
\end{Lemma}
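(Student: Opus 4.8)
The plan is to exploit that $\partial^*$ is given by the scalar matrix $\lambda^{(i)}$, which preserves $\ZZ$-degrees, and to combine this with a degree comparison coming from the minimality of $\FF$. Concretely, I will show that every element of $L_{ij}$ that is carried by a nonzero entry $\lambda^{(i)}_{kj}$ into the component $L_{i-1,k}$ already has degree strictly larger than the generating degree of $L_{i-1,k}$, hence lies in $\nn L_{i-1,k}$.

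First I would record the two degree facts. From minimality of $\FF$ we have $\partial(F_i)\subset\mm F_{i-1}$, so in $(\ref{def})$ the monomial $\xb^{\ab_{ij}-\ab_{i-1,k}}$ lies in $\mm$ whenever $\lambda^{(i)}_{kj}\neq 0$. Thus $\ab_{ij}\neq\ab_{i-1,k}$, while $\xb^{\ab_{ij}-\ab_{i-1,k}}\in S$ forces $\ab_{ij}\geq\ab_{i-1,k}$ componentwise; together these give the strict inequality $|\ab_{ij}|>|\ab_{i-1,k}|$. On the other side, Corollary~\ref{lij} writes $L_{ij}=\prod_{l=1}^nL_{l,\ab_{ij}(l)}$, and by Corollary~\ref{easily} each factor is one of the ideals $L_{l,\ab_k(l)}$, which by the standing hypothesis of this section has an $\ab_{ij}(l)$-linear resolution and is therefore generated in degree $\ab_{ij}(l)$. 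Since the factors involve disjoint variable sets, $L_{ij}$ is generated in the single degree $|\ab_{ij}|=\sum_{l}\ab_{ij}(l)$, and likewise $L_{i-1,k}$ is generated in the single degree $|\ab_{i-1,k}|$.

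I would then conclude as follows. It suffices to treat a monomial $u\in F_i^*$ supported in a single component $j$, say $u_j\in L_{ij}$, since such elements span $F_i^*$ over $K$. Fix $k$ with $\lambda^{(i)}_{kj}\neq 0$. By $(\ref{intersection})$ we have $L_{ij}\subset L_{i-1,k}$, so $u_j\in L_{i-1,k}$, and $\deg u_j\geq|\ab_{ij}|>|\ab_{i-1,k}|$. Because $L_{i-1,k}$ is generated in the single degree $|\ab_{i-1,k}|$, every homogeneous element of $L_{i-1,k}$ of degree strictly larger than $|\ab_{i-1,k}|$ belongs to $\nn L_{i-1,k}$; hence $u_j\in\nn L_{i-1,k}$ and $(\partial^*u)_k=\lambda^{(i)}_{kj}u_j\in\nn L_{i-1,k}$. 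As this holds for every $k$, we obtain $\partial^*(u)\in\nn F_{i-1}^*$, and by $K$-linearity $\partial^*(F_i^*)\subset\nn F_{i-1}^*$.

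The main obstacle is the strict degree inequality $|\ab_{ij}|>|\ab_{i-1,k}|$: everything hinges on the minimality of $\FF$, which guarantees that a nonzero scalar entry $\lambda^{(i)}_{kj}$ can only occur when the multidegree strictly increases. Once this is in place, the remaining step---that higher-degree elements of a singly-generated ideal lie in $\nn$ times that ideal---is immediate from the linear-resolution hypothesis on the building-block ideals $L_{l,\ab_j(l)}$.
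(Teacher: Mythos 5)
Your proof is correct and follows essentially the same route as the paper's: both arguments combine the strict drop $|\ab_{i-1,k}|<|\ab_{ij}|$ forced by minimality of $\FF$ whenever $\lambda^{(i)}_{kj}\neq 0$ with the fact (from Corollaries~\ref{easily} and~\ref{lij} and the linear-resolution hypothesis) that $L_{ij}$ lives in degrees $\geq|\ab_{ij}|$ while $L_{i-1,k}$ is generated in the single degree $|\ab_{i-1,k}|$. If anything, you make explicit the final step--that a homogeneous element of degree $>|\ab_{i-1,k}|$ in an ideal generated in degree $|\ab_{i-1,k}|$ lies in $\nn L_{i-1,k}$--which the paper leaves implicit.
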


\begin{proof}
For $i=1$, the assertion is obvious because $L\subset \nn$. Now let $i>1$. It is enough to show that $\partial^*(L_{ij})\subset \nn F_{i-1}^*$ for all $j$. Let $v\in L_{ij}$, $v\neq 0$. Then our assumption on the $L_{i,\ab_j(i)}$ together with Corollary~\ref{easily} and Corollary~\ref{lij} imply that $\deg v\geq |\ab_{ij}|$, where  for any $\ab \in \ZZ^n$ we denote by $|\ab|$ the sum $\sum_{l=1}^n \ab_{i}(l)$. Moreover, by the definition of $\partial^*$  it follows that  the component of $\partial^*(v)$ in $L_{i-1,k}$ is $0$, if $|\ab_{i-1,k}|\geq |\ab_{ij}|$ and is equal to $\lambda^{(i)}_{kj}v$ if $|\ab_{i-1,k}|<  |\ab_{ij}|$. Thus $\lambda^{(i)}_{kj}\deg v>|\ab_{i-1,k}|$ whenever $\lambda^{(i)}_{kj}\neq 0$.
\end{proof}

For the proof of Theorem~\ref{regularity} we use the strategy applied in the paper \cite{BH} and first construct a minimal $\ZZ^N$-graded resolution of $L$  where $N=\sum_{i=1}^nm_i$ (which is the number of variables of $T$). The resolution which we are going to construct will be the total complex of a certain double complex. The construction of this double complex is the following: For each $l=1,\ldots,n$ and each $j=1,\ldots,m$ we choose a minimal  $\ZZ^{m_l}$-graded free $T_l$-resolution $\HH^{(l,\ab_j(l))}$ of $L_{l,\ab_j(l)}$ with $\HH^{(l,\ab_{j_1}(l))}=\HH^{(l,\ab_{j_2}(l))}$  if $\ab_{j_1}(l)=\ab_{j_2}(l)$. We use these complexes to construct $\ZZ^N$-graded resolutions of the ideals $L_{ij}$, and let
\begin{eqnarray}
\label{product}
\GG^{(ij)}=\Tensor_{l=1}^n\HH^{(l,\ab_{ij}(l))}.
\end{eqnarray}
Here $\HH^{(l,\ab_{ij}(l))}$ is the complex $\HH^{(l,\ab_k(l))}$ if  $\ab_{ij}(l)=\ab_k(l)$, see Corollary~\ref{easily}.

As in \cite[Proposition 3.4]{BH} it follows that $\GG^{(ij)}$ is a minimal  $\ZZ^n$-graded free $T$-resolution of $L_{ij}$.

Let $\GG^{(i)}= \Dirsum_{j=1}^{\beta_i}\GG^{(ij)}$. Then $\GG^{(i)}$ is a minimal $\ZZ^N$-graded free $T$-resolution of $F_i^*=\Dirsum_{j=1}^{\beta_i}L_{ij}$.

It remains to define complex homomorphisms $\sigma_i\: \GG^{(i)}\to \GG^{(i-1)}$  for $i=1,\ldots p$ which extend the chain maps $\partial^*\: F_i^*\to F_{i-1}^*$ and such that $\sigma_{i-1}\circ \sigma_i=0$  and $\sigma_i(\GG^{(i)})\subset \nn \GG^{(i-1)}$ for all $i$. To define $\sigma_i$ it is enough to describe its  components
\[
\sigma_i^{(kj)}\:\ \GG^{(ij)}= \Tensor_{l=1}^n\HH^{(l,\ab_{ij}(l))}\to \GG^{(i-1,k)}= \Tensor_{l=1}^n\HH^{(l,\ab_{i-1,k}(l))}.
\]
We let  $\sigma_i^{(kj)}=\lambda_{kj}^{(i)}\tau_{i}^{(1,kj)}\tensor \tau_{i}^{(2,kj)}\tensor \cdots \tensor \tau_{i}^{(n,kj)}$, where
\[
\tau_i^{(l,kj)}\: \HH^{(l,\ab_{ij}(l))}\to \HH^{(l,\ab_{i-1,k}(l))}.
\]
The definition of the maps $\tau_i^{(l,kj)}$ needs some preparation: for each $l=1,\ldots,n$ let $d_{l1}<d_{l2}<\cdots <d_{lr_l}$ be integers such that
\[
\{d_{l1},d_{l2},\cdots, d_{lr_l}\}=\{\ab_1(l),\ab_2(l),\cdots,\ab_m(l)\}.
\]
Now for each $l$ and each $k$ with $1<k$ we have $L_{l,d_{lk}}\subset L_{l,d_{l,k-1}}$ and choose a  homogeneous complex homomorphism
\[
\rho^{(l,d_k)}\: \HH^{(l,d_{lk})}\to \HH^{(l,d_{l,k-1})},
\]
which extends the inclusion map $L_{l,d_{lk}}\subset L_{l,d_{l,k-1}}$.

Now we come to the definition of $\tau_i^{(l,kj)}\: \HH^{(l,\ab_{ij}(l))}\to \HH^{(l,\ab_{i-1,k}(l))}$. We first notice that $\ab_{ij}=\ab_s$ and $\ab_{i-1,j}=\ab_t$ for some $s$ and $t$ with $1\leq s,t\leq m$, see Corollary~\ref{easily}.

\begin{Definition}
\label{tau}
{\em (i) We let  $\tau_i^{(l,kj)}=0$, if  $\lambda^{(i)}_{kj}=0$.

(ii) If  $\lambda^{(i)}_{kj}\neq 0$, then  $\ab_s(l)\geq \ab_t(l)$. If  $\ab_s(l)= \ab_t(l)$ we let  $\tau_i^{(l,kj)}=\id$.

 If $\ab_s(l)> \ab_t(l)$,  $\ab_s(l)=d_{lb}$ and $\ab_t(l)= d_{lc}$  for some $d_{lb}$ and $d_{lc}$ with $b>c$, then we let
\[
\tau_i^{(l,kj)}=\rho^{(ld_{lc})}\circ \rho^{(ld_{l,c-1})}\circ \cdots \circ \rho^{(ld_{lb})}.
\]
}
\end{Definition}
Notice that $\tau_i^{(l,kj)}\: \HH^{(l,\ab_{ij}(l))}\to \HH^{(l,\ab_{i-1,k}(l))}$ extends the inclusion map $L_{l,\ab_{ij}(l))}\subset L_{l,\ab_{i-1,k}(l)}$ whenever $\ab_{ij}(l)\geq \ab_{i-1,k}(l)$.

\medskip
We first show that $\sigma_i(\GG^{(i)})\subset \nn \GG^{(i-1)}$. For that it suffices to show that $\sigma_i^{(kj)}(\GG^{(ij)})\subset \nn \GG^{(i-1,k)}$. If $\lambda^{(i)}_{kj}=0$, then $\sigma_i^{(kj)}=0$. On the other hand, if $\lambda^{(i)}_{kj}\neq 0$, then from the definition (\ref{def}) of $\lambda^{(i)}_{kj}$ and the fact that the resolution of $I$ is minimal, it follows  that $\ab_{ij}>\ab_{i-1,k}$. Therefore, $\ab_{ij}(l)\geq \ab_{i-1,k}(l)$ for all $l$, and $\ab_{ij}(l)>\ab_{i-1,k}(l)$ for at least one $l$. Since $\HH^{(l,\ab_{ij}(l))}$ is an $\ab_{ij}(l)$-linear resolution and $\HH^{(l,\ab_{i-1,k}(l))}$ is an $\ab_{i-1,k}(l)$-linear resolution and since $\tau_i^{l,kj}$ is a homogeneous complex homomorphism, we see  that $\Im \tau_i^{(l, kj)}\subset \nn^{\ab_{ij}(l)-\ab_{i-1,k}(l)}\HH^{(l,\ab_{i-1,k}(l))}$. Hence
\[
\Im \sigma_i^{(kj)}\subset (\prod_{l=1}^n\nn^{\ab_{ij}(l)-\ab_{i-1,k}(l)}) \GG^{(i-1,k)}\subset \nn \GG^{(i-1,k)}.
\]

Next we prove that the complex homomorphism $\sigma_i$ extends the chain map $\partial^*\: F_i^*\to F_{i-1}^*$. In other words, we have to show that diagram~(\ref{diagram1}) commutes, where for all $i$ we denote by $\pi$ is the augmentation map of $G^{(i)}$.
\begin{eqnarray}
\label{diagram1}
\begin{CD}
G_0^{(i)}@> \sigma_i >> G_0^{(i-1)}\\
@V \pi V V @V V \pi V\\
F_i^*@>\partial^* >> F_{i-1}^*
\end{CD}
\end{eqnarray}
To prove this it suffices to show that the diagrams ~(\ref{diagram2}) commute, where the map   $L_{ij}\to L_{i-1,k}$  is the restriction of $\partial^*$ to the summands $L_{ij}$ and $L_{i-1,k}$.
\begin{eqnarray}
\label{diagram2}
\begin{CD}
\Tensor_{l=1}^n H_0^{(l,\ab_{ij}(l))}& = & G_0^{(ij)}@> \sigma^{(ij)} >> G_0^{(i-1,k)}& = &\Tensor_{l=1}^n H_0^{(l,\ab_{i-1,k}(l))}\\
&& @V \pi V V @V V \pi V &&\\
\prod_{l=1}^nL_{l,\ab_{ij}}(l)& = & L_{ij}@>>> L_{i-1,k}& = &\prod_{l=1}^nL_{l,\ab_{i-1k}}(l)
\end{CD}
\end{eqnarray}
If $\lambda^{(i)}_{kj}=0$, then $\sigma^{(ij)}=0$ and $L_{ij}\to L_{i-1,k}$ is the zero map, so that in this case the diagram commutes. Now let $\lambda^{(i)}_{kj}\neq 0$, and let  $h_1\tensor h_2\tensor \cdots \tensor h_n\in \Tensor_{l=1}^n H_0^{(l,\ab_{ij}(l))}$. Then
\[
\partial^*(\pi(h_1\tensor h_2\tensor \cdots \tensor h_n))=\lambda ^{(i)}_{kj}\pi(h_1)\pi(h_2)\cdots \pi(h_n).
\]
On the other hand,
\begin{eqnarray*}
\pi(\sigma^{(ij)}(h_1\tensor h_2\tensor\cdots \tensor h_n))&=& \pi(\lambda ^{(i)}_{kj}\tau_{i}^{(1,kj)}(h_{1})\tensor \cdots \tensor\tau_{i}^{(n,kj)}(h_{n}))\\
&=&\lambda ^{(i)}_{kj}\pi(\tau_{i}^{(1,kj)}(h_{1}))\tensor \cdots \tensor\pi(\tau_{i}^{(n,kj)}(h_{n}))\\
&=&\lambda ^{(i)}_{kj}\pi(h_1)\pi(h_2)\cdots \pi(h_n).
\end{eqnarray*}
The last equation follows  because  $\tau_{i}^{(l,kj)}\: \HH^{(l,\ab_{ij}(l))}\to \HH^{(l,\ab_{i-1,k}(l))}$ extends the inclusion map $L_{l,\ab_{ij}(l)}\subset L_{l,\ab_{i-1k}(l)}$.

\medskip

Finally we show that $\sigma_{i-1}\circ \sigma_i=0$. For that we need to show that $$\sum_{k=1}^{\beta_{i-1}}\sigma_{i-1}^{(qk)}\circ\sigma_i^{(kj)}=0$$ for all $q$ and $j$.

\medskip
We have

\[
\sigma_{i-1}^{(qk)}\circ\sigma_i^{(kj)}
=\lambda^{(i-1)}_{qk}(\tau_{i-1}^{(1,qk)}\tensor \tau_{i-1}^{(2,qk)}\tensor \cdots \tensor \tau_{i-1}^{(n,qk)})\circ \lambda^{(i)}_{kj}(\tau_{i}^{(1,kj)}\tensor \tau_{i}^{(2,kj)}\tensor \cdots \tensor \tau_{i}^{(n,kj)})
\]
\[
=\lambda^{(i-1)}_{qk}\lambda^{(i)}_{kj}(\tau_{i-1}^{(1,qk)}\circ\tau_i^{(1,kj)})\tensor (\tau_{i-1}^{(2,qk)}\circ\tau_i^{(2,kj)})\tensor \dots \tensor (\tau_{i-1}^{(n,qk)}\circ\tau_i^{(n,kj)}).
\]

We claim that the maps $\tau_{i-1}^{(l,qk)}\circ\tau_i^{(l,kj)}$ in this tensor product are equal to each other for all $k$ for which $\lambda^{(i-1)}_{qk}\lambda^{(i)}_{kj}\neq 0$.
Set $$\alpha^{(qj)}=(\tau_{i-1}^{(1,qk)}\circ\tau_i^{(1,kj)})\tensor (\tau_{i-1}^{(2,qk)}\circ\tau_i^{(2,kj)})\tensor \dots \tensor (\tau_{i-1}^{(n,qk)}\circ\tau_i^{(n,kj)}).$$
 Then the claim implies that the restriction of $\sigma_{i-1}\circ \sigma_i$ to $\GG^{(ij)}$  is given by
\[
(\sum_k \lambda^{(i-1)}_{qk}\lambda^{(i)}_{kj})\alpha^{(qj)}.
\]
This implies that $\sigma_{i-1}\circ \sigma_i=0$, as desired.

\medskip
It remains to prove the claim.  We will show that $\tau_{i-1}^{(l,qk)}\circ\tau_i^{(l,kj)}$ only depends on the numbers $\ab_{ij}(l)$ and $\ab_{i-2,q}(l)$ (and hence not on $k$). By Corollary~\ref{easily} there exist integers $s,t,u$ such  that $\ab_{ij}=\ab_s$, $\ab_{i-1,k}=\ab_t$ and $\ab_{i-2,q}=\ab_u$. By assumption, $\lambda^{(i-1)}_{qk}\lambda^{(i)}_{kj}\neq 0$. Therefore, $\lambda^{(i)}_{kj}\neq 0$ and  $\lambda^{(i-1)}_{qk}\neq 0$,  and hence  $\ab_s>\ab_t>\ab_u$. Therefore,  $\ab_s(l)\geq \ab_t(l)\geq \ab_u(l)$ for $l=1,\ldots,n$. If  $\ab_s(l)= \ab_t(l)= \ab_u(l)$ then $\tau_{i-1}^{(l,qk)}\circ\tau_i^{(l,kj)}=\id$, according to Definition~\ref{tau}.

If $\ab_s(l)> \ab_t(l)>\ab_u(l)$,  we have $\ab_s(l)=d_{ls}$, $\ab_t(l)= d_{lt}$ and $\ab_u(l)= d_{lu}$ with $s>t>u$. Then
\[
\tau_{i-1}^{(l,qk)}\circ\tau_i^{(l,kj)}=\rho^{(ld_{lu})}\circ \rho^{(ld_{l,u-1})}\circ \cdots \circ \rho^{(ld_{ls})},
\]
and hence $\tau_{i-1}^{(l,qk)}\circ\tau_i^{(l,kj)}$ depends only on $ld_{ls}=\ab_s(l)=\ab_ij{(l)}$ and on $d_{lu}=\ab_u(l)=\ab_{i-2,q}(l)$.

If $\ab_s(l)> \ab_t(l)=\ab_u(l)$, then $\tau_i^{(l,kj)}=\rho^{(ld_{lu})}\circ \rho^{(ld_{l,u-1})}\circ \cdots \circ \rho^{(ld_{lt})}$ and $\tau_{i-1}^{(l,qk)}=\id$, so that \[
\tau_{i-1}^{(l,qk)}\circ\tau_i^{(l,kj)}=\rho^{(ld_{lu})}\circ \rho^{(ld_{l,u-1)}}\circ \cdots \circ \rho^{(ld_{lt})}.
\]
Hence $\tau_{i-1}^{(l,qk)}\circ\tau_i^{(l,kj)}$ depends only on $ld_{ls}=\ab_s(l)=\ab_{ij}(l)$ and $d_{lt}=\ab_t(l)=\ab_u(l)=\ab_{i-2,q}(l)$.  Similarly one treats the case that  $\ab_s(l)= \ab_t(l)>\ab_u(l)$,

This completes the proof of the assertion that $\sigma^{(i-1)}\circ \sigma^{(i)}=0.$

\medskip

Now let $\DD$ be the double complex with $D_{ij}=G^{(j)}_i$ for all $i$ and $j$. The column of this double complex are the complexes $G^{(j)}$ and the row complexes are the complexes
\[
\to G_i^{(j)}\to G_{i-1}^{(j)}\to \cdots \to G_{0}^{(j)}
\]
with the $i$th component of the $\sigma_j$ as chain map.

\begin{Theorem}
\label{resolution}
The total complex $\Tot(\DD)$ of the double complex $\DD$ is a graded minimal free resolution of $T/L$.
\end{Theorem}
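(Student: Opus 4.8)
The plan is to read the homology of $\Tot(\DD)$ off the spectral sequence of the double complex $\DD$ obtained by taking homology first along the columns, and to deduce minimality directly from the two properties of the maps $\sigma_i$ that were arranged during the construction. Recall that $\DD$ really is a double complex: its columns $\GG^{(j)}$ are complexes, its rows are complexes because $\sigma_{i-1}\circ\sigma_i=0$, and the squares commute since the $\sigma_j$ are chain maps; hence $\Tot(\DD)$ (with the usual sign convention) is a complex of free $T$-modules.

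First I would settle minimality, which is immediate. The differential of $\Tot(\DD)$ splits into a vertical part, given by the chain maps of the column complexes $\GG^{(j)}$, and a horizontal part, given by the components of the maps $\sigma_j$. Each $\GG^{(j)}$ is by construction a minimal free $T$-resolution, so its differential has all entries in $\nn$; and we arranged that $\sigma_i(\GG^{(i)})\subset \nn\GG^{(i-1)}$ for every $i$. Hence the whole differential of $\Tot(\DD)$ maps into $\nn\Tot(\DD)$, so once acyclicity is known the resolution is automatically minimal.

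The substance is the acyclicity, and here I would use the column-filtration spectral sequence of $\DD$. The double complex is concentrated in the region $i\ge 0$, $0\le j\le p$, hence bounded, so this spectral sequence converges to $H_*(\Tot(\DD))$. Since the $j$th column $\GG^{(j)}$ resolves $F_j^*$, the vertical homology is $F_j^*$ in row $i=0$ and vanishes for $i>0$; thus the $E_1$-page is concentrated in the single row $i=0$, where it is the complex $F_\bullet^*$ equipped with the differential induced by the $\sigma_j$. Because each $\sigma_j$ was built to extend the chain map $\partial^*\:F_j^*\to F_{j-1}^*$, this induced differential is exactly $\partial^*$, so the row $i=0$ of $E_1$ is precisely the complex $\FF^*$.

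Taking horizontal homology then gives the $E_2$-page, which by Theorem~\ref{main} --- the acyclicity of $\FF^*$ together with $H_0(\FF^*)=T/L$ --- is concentrated at $(0,0)$ with value $T/L$. The spectral sequence therefore degenerates at $E_2$, yielding $H_0(\Tot(\DD))=T/L$ and $H_n(\Tot(\DD))=0$ for $n>0$; combined with the minimality above this proves the theorem. The one genuinely nontrivial ingredient is Theorem~\ref{main}, which guarantees that the collapsed $E_1$-row $\FF^*$ has the correct homology; everything else is bookkeeping, and the two prepared properties of the $\sigma_i$ are exactly what force the $E_1$-row to equal $\FF^*$ and the total complex to be minimal.
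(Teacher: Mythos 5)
Your argument is correct and follows essentially the same route as the paper: both establish minimality from the facts that each $\GG^{(j)}$ is minimal and $\sigma_i(\GG^{(i)})\subset \nn\GG^{(i-1)}$, and both obtain acyclicity from the spectral sequence of $\DD$ that first resolves each $F_j^*$ along the columns, collapses the $E_1$-page to the complex $\FF^*$, and then invokes Theorem~\ref{main}. The only difference is cosmetic (your write-up of the $E_1$- and $E_2$-pages is a bit more careful about which filtration is being used), so there is nothing to add.
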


\begin{proof}
It follows from the construction of $\DD$ that $(\Tot(\DD), \delta)$ is a complex of graded free $T$-modules. Moreover, since each $\GG^{(j)}$ is a graded minimal free resolution of $F_j^*$ and since $\sigma_i (\GG^{(j)})\subset \nn \GG^{(j)}$ it follows that $\delta(\Tot(\DD))\subset \nn \Tot(\DD)$. Thus it remains to show that $\Tot(\DD)$ is acyclic with $H_0(\Tot(\DD))=T/L$. In order to prove this we compute the spectral sequence attached to $\DD$ with respect to the row filtration. Then $E^1_{ij}=H_i^h(D_{*,i})=H_i^h(G^{(j)})$. Since $G^{(j)}$ is a free resolution  of $F_j^*$ it follows that $E^1_{ij}=0$ for $i>0$ and $E^1_{0,j}=F_j^*$. Next we compute $E^2_{ij}=H^v_j(H_i^h(\DD))$, and see that $E^2_{ij}=0$ if $i>0$ and $E^2_{0j}=H_j(\FF^*)$.
It follows from  Theorem~\ref{main} that $E^2_{0j}=0$ for $i>0$ and $E^2_{00}=T/L$. This proves that $\Tot(\DD)$ is indeed acyclic with $H_0(\Tot(\DD))=T/L$.
\end{proof}

\begin{proof}[Proof of Theorem~\ref{regularity}]
Let $t_{k}(I)=\max\{j:\beta_{kj}(I)\neq 0\}$, then
\[
\reg(I)=\max\{t_{k}(I)-k:k=0,\dots,p\}.
\]
On the other hand,  $\GG^{(i)}= \Dirsum_{j=1}^{\beta_i}\GG^{(ij)}$, where $\GG^{(ij)}$ is an $|\ab_{ij}|$-linear resolution of $L_{ij}$. Therefore,
$t_{k}(F_{i}^*)=\max_{j}\{k+|\ab_{ij}|\}=k+t_{i-1}(I)$. Let $t_{k}(L)=\max\{j:\beta_{kj}(L)\neq 0\}$.
Then it follows from Theorem~\ref{resolution} that
\[
t_{k}(L)=\max\{t_{k}(F_{1}^*),t_{k-1}(F_{2}^*),\dots,t_{0}(F_{k+1}^*)\}
=\max\{k+t_{0}(I),(k-1)+t_{1}(I),\dots,t_{k}(I)\}.
\]
It follows that
\[
t_{k}(L)-k=\max\{t_{0}(I),t_{1}(I)-1,\dots,t_{k}(I)-k\}
\]
Hence
\[
\reg(L)=\max_k\{\max\{t_{0}(I),t_{1}(I)-1,\dots,t_{k}(I)-k\}\}=\max\{t_{0}(I),\dots,t_{p}(I)-p\}=\reg(I).
\]
\end{proof}

As another consequence of Theorem~\ref{resolution}, we obtain
\begin{Corollary}
With the notation introduced we have
\[
\projdim (T/L)=\max_{i,j}\{\sum_{l=1}^n\projdim(L_{l,\ab_{ij}(l)}) +i\}.
\]

\begin{proof}
By Theorem  ~\ref{resolution} the complex $\Tot(\DD)$ is a minimal free resolution of $L$. Therefore,
\[
\projdim (T/L)=\max_{i,j}\{\sum_{l=1}^n\projdim F_{i}^*+i:i=0,\ldots,p\}.
\]
On the other hand, since $F^*_i=\Dirsum_{j=1}^{\beta_i} L_{ij}$, we conclude that $\projdim F_{i}^*$ is the maximum number
among the numbers $\projdim L_{ij}$. Hence
\[
\projdim (F_i^*)=\max_{i,j}\{\projdim L_{ij}\}
\]
Now by formula (\ref{product}) we have
\[
\projdim L_{ij}=\sum_{l=1}^n \projdim L_{l, \ab_{ij}(l)},
\]
and hence
\[
\projdim (T/L)=\max_{i,j}\{\sum_{l=1}^n\projdim L_{l,\ab_{ij}(l)} +i\}.
\]
\end{proof}

\end{Corollary}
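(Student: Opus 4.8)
The plan is to read the projective dimension directly off the minimal free resolution $\Tot(\DD)$ produced by Theorem~\ref{resolution}. The essential point is that that theorem gives a \emph{minimal} free resolution of $T/L$, not merely an acyclic one; consequently $\projdim(T/L)$ equals the largest homological degree in which $\Tot(\DD)$ is nonzero, with no further cancellation possible. Thus the whole computation reduces to determining the length of the total complex, which I would then express in terms of the data $L_{l,\ab_{ij}(l)}$.

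First I would unwind the homological grading of the double complex $\DD$. Since $D_{ij}=G^{(j)}_i$, the module of $\Tot(\DD)$ in homological degree $N$ is $\Dirsum_{s+i=N}G^{(i)}_s$, where $s$ denotes the homological degree inside the column $\GG^{(i)}$. Hence the length of $\Tot(\DD)$ is $\max\{i+s: G^{(i)}_s\neq 0\}$. Because each column $\GG^{(i)}$ is a minimal free resolution of $F_i^*$, the module $G^{(i)}_s$ is nonzero precisely for $0\leq s\leq \projdim F_i^*$, so that
\[
\projdim(T/L)=\max_{i}\{i+\projdim F_i^*\}.
\]
Next, since $F_i^*=\Dirsum_{j=1}^{\beta_i}L_{ij}$ and the projective dimension of a finite direct sum is the maximum of the projective dimensions of the summands, one gets $\projdim F_i^*=\max_{j}\projdim L_{ij}$. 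Finally, formula~(\ref{product}) presents $\GG^{(ij)}=\Tensor_{l=1}^n\HH^{(l,\ab_{ij}(l))}$ as a minimal free $T$-resolution of $L_{ij}$; since $T=T_1\tensor_K\cdots\tensor_K T_n$ and the tensor product of the minimal $T_l$-resolutions stays minimal, its length is the sum of the lengths, giving $\projdim L_{ij}=\sum_{l=1}^n\projdim L_{l,\ab_{ij}(l)}$. Substituting into the two displays above yields the claimed formula.

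I do not expect a genuine obstacle: all the substantive work is already carried out in Theorem~\ref{resolution}, which supplies the minimality of the total complex. The only points demanding care are bookkeeping ones, namely correctly matching the two homological gradings of $\DD$ (where $D_{ij}=G^{(j)}_i$) against the index $i$ and the summand index $j$ occurring in $\ab_{ij}$, and making sure to invoke the \emph{minimality} of $\Tot(\DD)$ rather than mere acyclicity, so that the length of the resolution genuinely equals $\projdim(T/L)$ and no term in the maximum is lost to cancellation.
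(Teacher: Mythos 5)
Your proposal is correct and follows essentially the same route as the paper: read off $\projdim(T/L)$ from the minimal total complex $\Tot(\DD)$, reduce $\projdim F_i^*$ to $\max_j\projdim L_{ij}$ via the direct sum decomposition, and use the tensor-product presentation (\ref{product}) to get $\projdim L_{ij}=\sum_{l=1}^n\projdim L_{l,\ab_{ij}(l)}$. Your unwinding of the two homological gradings of $\DD$ is in fact stated more carefully than in the paper's own (somewhat garbled) intermediate formula.
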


\end{document}